\documentclass{amsart}
\pdfoutput=1

\usepackage[utf8]{inputenc}
\usepackage[T1]{fontenc}
\usepackage{lmodern}
\usepackage[margin=1.25in]{geometry}
\usepackage{setspace}
\usepackage[usenames, dvipsnames]{xcolor}
\usepackage{graphicx}
\usepackage{mathtools}
\usepackage{amssymb}
\usepackage{amsthm}
\usepackage{xspace}
\usepackage{tikz-cd}
\usepackage{tkz-euclide}
\usepackage{tkz-graph}
\usetikzlibrary{
  knots,
  hobby,
  decorations.pathreplacing,
  shapes.geometric,
  calc
}

\usepackage{microtype}       %
\usepackage{bbm}
\usepackage{slashed}
\usepackage[backref=page, bookmarks=false]{hyperref}
\usepackage[capitalize, noabbrev]{cleveref}
\newcommand{\Z}{\mathbb Z}

\newsavebox{\pullback}
\sbox\pullback{%
\begin{tikzpicture}%
\draw (0,0) -- (1ex,0ex);%
\draw (1ex,0ex) -- (1ex,1ex);%
\end{tikzpicture}}

\newtheorem{thm}{Theorem}[section]
\newtheorem{lemma}[thm]{Lemma}

\newtheorem{theorem}[thm]{Theorem}

\theoremstyle{definition}
\newtheorem{example}[thm]{Example}
\newtheorem{definition}[thm]{Definition}

\newtheorem{introthm}{Theorem}

\theoremstyle{remark}
\newtheorem{remark}[thm]{Remark}

\DeclarePairedDelimiter\paren{(}{)}

\makeatletter
	\let\oldparen\paren
	\def\paren{\@ifstar{\oldparen}{\oldparen*}}
\makeatother

\usepackage{microtype}
\usepackage{hypcap}

\hypersetup{
 colorlinks,
 linkcolor={red!50!black},
 citecolor={green!50!black},
 urlcolor={blue!80!black}
}

 \usepackage{textcomp}
 
\newcommand{\Zbb}{\mathbb{Z}}
\newcommand{\Qbb}{\mathbb{Q}}

\usepackage{adjustbox}
\usepackage{caption}

\tikzset{%
    symbol/.style={%
        draw=none,
        every to/.append style={%
            edge node={node [sloped, allow upside down, auto=false]{$#1$}}}
    }
}

\makeatletter
\newcommand{\colim@}[2]{%
  \vtop{\m@th\ialign{##\cr
    \hfil$#1\operator@font colim$\hfil\cr
    \noalign{\nointerlineskip\kern-\ex@}\cr}}%
}

\newcommand{\colim}{%
  \mathop{\mathpalette\colim@{\scriptscriptstyle}}\nmlimits@
}

\begin{document}

\title{Tensor Product $K$-theory is Rational Algebraic $K$-theory}

\author{Amartya Shekhar Dubey}
\address{National Institute of Science Education and Research, Homi Bhabha National Institute  }
\curraddr{}
\email{amartyashekhar.dubey@niser.ac.in}
\thanks{}

\author{Mattie Ji}
\address{University of Pennsylvania}
\curraddr{}
\email{mji13@sas.upenn.edu}
\thanks{}

\subjclass[2020]{Primary: 19D23, 19D50.}

\begin{abstract}
For a commutative ring $R$ with unity, its algebraic $K$-theory space $K(R)$ may be obtained by group-completing the symmetric monoidal category of finitely generated free $R$-modules under direct sum. A natural question is what happens when one group-completes with respect to the tensor product structure instead. In this note, we give a direct proof of the folklore theorem that the resulting group-completion is the rationalization of $K(R)$, up to $\pi_0$. We also discuss how a similar group-completion would give the $p$-perfection and, more generally, the localization of $K(R)$ at any non-trivial multiplicatively closed subset $S \subseteq \mathbb{Z}_{> 0}$. The localization statement can be recovered from a localization theorem of May. We give a plus-construction proof without using the full machinery of multiplicative infinite loop space theory.
\end{abstract}

\maketitle

\begin{figure*}[!htbp]
\centering
\includegraphics[width=0.75\textwidth]{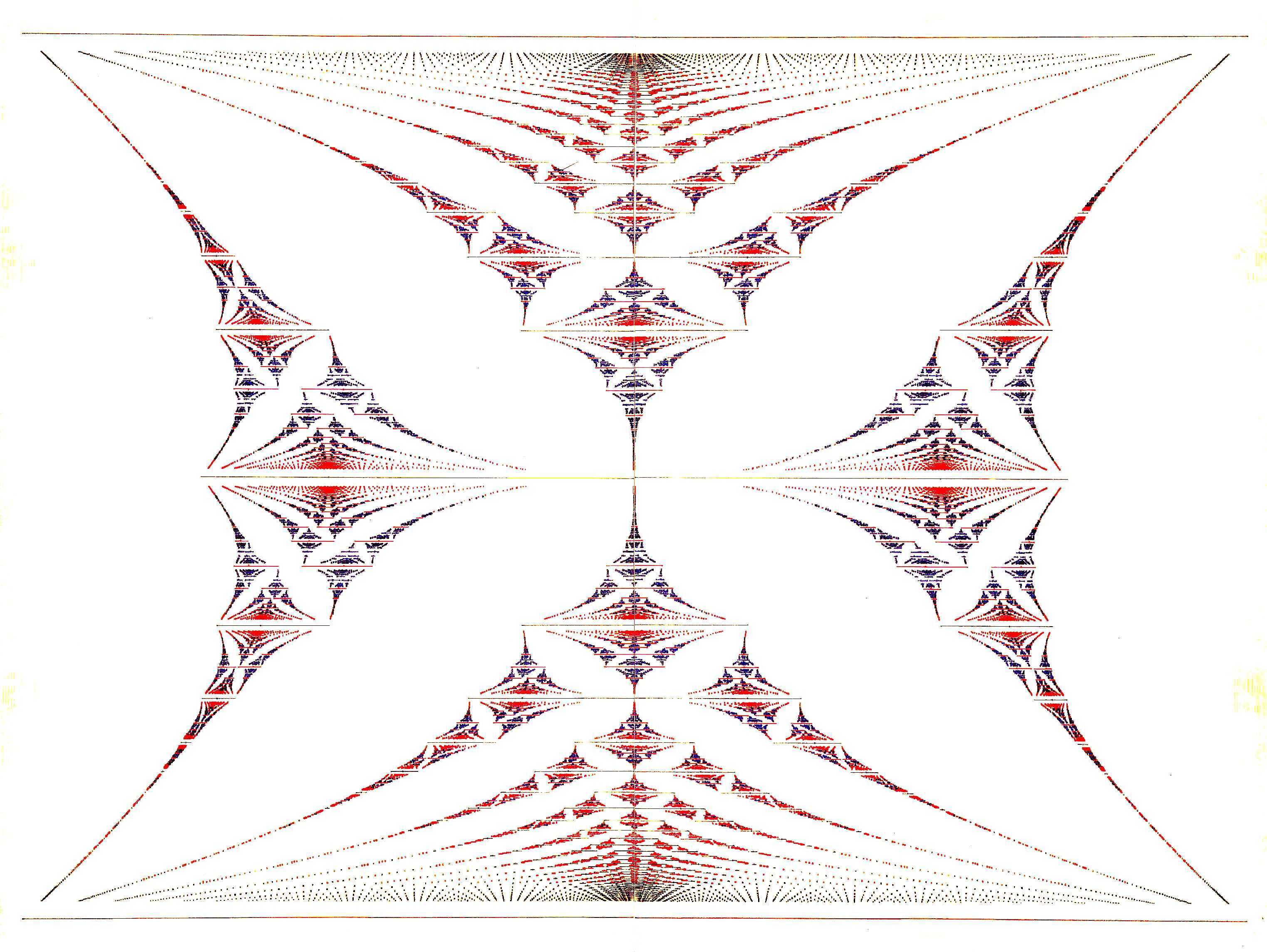}
\caption*{The Hofstadter butterfly: a fractal spectrum (in the sense of functional analysis) whose every gap is labeled by a $K$-theory class. \textit{A butterfly that only exists because $K$-theory closed its wings!} Image by Douglas Hofstadter (1976), sourced from Wikimedia Commons \cite{Hofstadter1976Gplot}, public domain.}
\end{figure*}
\clearpage

\section{Algebraic $K$-Theory, Tensor Product $K$-Theory, and History}

For a commutative ring $R$ with unity, its algebraic $K$-theory $K(R)$ has been increasingly relevant in a variety of areas in mathematics (see \cite{weibel2013k} for a comprehensive coverage). One way to define this is via Segal's $K$-theory of symmetric monoidal categories \cite{Segal1974}.

\begin{definition}
    For a small symmetric monoidal category $\mathcal{C}$, we write $K(\mathcal{C})$ to denote Segal's $K$-theory space of $\mathcal{C}$. We write $\tau_{>0} K(\mathcal{C})$ to denote the component of $K(\mathcal{C})$ at the identity. We write $K_i(\mathcal{C})$ to denote $\pi_i(K(\mathcal{C}))$. $K(\mathcal{C})$ has a canonical structure of being an infinite loop space (see \cite{may_operads, Segal1974, may_thomason_uniqueness}).
\end{definition}

\subsection{Tensor Product $K$-theory and Rationalization} When $\mathcal{C} = \operatorname{Free}_{\oplus}(R)$ is the symmetric monoidal category of finitely generated free $R$-modules (e.g., $R^n$ for $n \geq 0$), with morphisms being $R$-linear automorphisms, equipped with the symmetric monoidal structure of direct sums, the classical $K$-theory space $K(R)$ is given by
    \[K(R) \coloneqq K_0(R) \times \tau_{>0} K(\operatorname{Free}_{\oplus}(R)),\]
where $K_0(R)$ has the discrete topology. From here we also see that $\tau_{>0} K(R) = \tau_{>0} K(\operatorname{Free}_{\oplus}(R))$. Algebraic $K$-theory can then be morally regarded as a \textit{group completion} with respect to the \textit{direct sum structure} of $R$-modules. One natural question one might wonder is: what if we \textit{group complete} with respect to the \textit{tensor product structure} instead?

\begin{definition}
    We use $\operatorname{Free}_{\otimes}(R)$ to denote the symmetric monoidal category of non-zero finitely generated free $R$-modules (e.g., $R^n$ for $n > 0$), with morphisms being the automorphism groups $\operatorname{GL}_{n}(R)$ of each $R^n$, equipped with the symmetric monoidal structure of tensor products.
\end{definition}

For $i > 0$, $K_i(\operatorname{Free}_{\otimes}(R))$ may be thought of as a tensor version of $K$-theory, or \textit{tensor product $K$-theory}. The purpose of this write-up is to prove the following folklore result.
\begin{introthm}\label{thm::k_theory_rationalization}
The space $\tau_{> 0} K(\operatorname{Free}_{\otimes}(R))$ is equivalent to the rationalization of $\tau_{> 0} K(R)$. Furthermore, they are equivalent as infinite loop spaces.
\end{introthm}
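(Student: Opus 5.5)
We will prove this via the group-completion theorem and the plus construction. Write $B_\otimes = \coprod_{n\ge 1} B\operatorname{GL}_n(R)$ for the classifying space of $\operatorname{Free}_{\otimes}(R)$. By the McDuff--Segal group-completion theorem, $K(\operatorname{Free}_{\otimes}(R)) \simeq \Omega B(B_\otimes)$. The homology of this group completion is the localization $H_*(B_\otimes)[\pi_0^{-1}]$. The monoid $\pi_0 = (\mathbb{Z}_{>0},\cdot)$ is generated by the primes, so we can compute this localization by a telescope. Enumerate the positive integers so that each divides a later one, for instance $m_k = k!$. Form
\[ T = \operatorname{hocolim}\big( B\operatorname{GL}_{m_1}(R) \to B\operatorname{GL}_{m_2}(R) \to \cdots \big), \]
where the $k$-th map is induced by $-\otimes R^{m_{k+1}/m_k}$, that is, $A \mapsto A \otimes I_{m_{k+1}/m_k}$. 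Since $\pi_0$ is commutative, the group-completion theorem then gives $\tau_{>0}K(\operatorname{Free}_{\otimes}(R)) \simeq T^+$, after checking that $\pi_1(T)$ has perfect commutator subgroup so that the plus construction makes sense.

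The key step is to identify $T^+$ with the rationalization of $B\operatorname{GL}(R)^+ = \tau_{>0}K(R)$. The map $A \mapsto A \otimes I_d$ agrees with the block-diagonal map $A \mapsto A \oplus \cdots \oplus A$ ($d$ copies) up to conjugation by a permutation matrix. Conjugation acts trivially on homology after stabilization, so after passing to $B\operatorname{GL}(R)^+$ the map is homotopic to multiplication by $d$ in the H-space (indeed infinite loop space) structure coming from $\oplus$. Homological stability for $\operatorname{GL}_n(R)$ is not available for general $R$, so we avoid it by working stably throughout. Consider the doubly indexed system of the $B\operatorname{GL}_{m}(R)$, included by both $\oplus$-stabilization and $\otimes$-maps. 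Taking the colimit first in the $\oplus$ direction yields $\operatorname{hocolim}(B\operatorname{GL}(R) \xrightarrow{\cdot d_1} B\operatorname{GL}(R) \xrightarrow{\cdot d_2} \cdots)$. After plus construction this becomes the telescope of $B\operatorname{GL}(R)^+$ along multiplication by $d_k$. Because the $d_k$ exhaust all primes with unbounded multiplicity, this telescope has homotopy groups $\operatorname{colim}(K_i \xrightarrow{d_k} K_i) = K_i(R)\otimes\mathbb{Q}$. It is therefore $\tau_{>0}K(R)_{\mathbb{Q}}$, with the telescope of an infinite loop space along multiplication by integers computing rationalization. A cofinality argument then compares this double colimit with $T$ itself. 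The subtle point is that $\otimes$-stabilization alone does not contain all the $\oplus$-stabilizations. However, since $A\otimes I_d$ is conjugate to a block sum and $\operatorname{GL}_m \to \operatorname{GL}_{md}$ through $A\mapsto A\otimes I_d$ factors compatibly, the maps from $T$ into the double system induce isomorphisms on homology. Both sides are plus-constructible with abelian $\pi_1$ after plus, so the homology isomorphism upgrades to a homotopy equivalence.

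For the infinite loop statement, rationalization of a connected infinite loop space is unique as an infinite loop space. The natural symmetric monoidal functor is not directly available, so we instead construct an infinite loop map. The map $\tau_{>0}K(R) \to \tau_{>0}K(\operatorname{Free}_\otimes(R))$ induced on the $\mathbb{Q}$-local level should be obtained from the identification $\tau_{>0}K(\operatorname{Free}_\otimes(R)) \simeq$ the unit component of the multiplicative $K$-theory. Then $1+x \mapsto$ (logarithm) gives the comparison, since the unit component $SL_1$ of a rational ring spectrum is equivalent to its additive $\tau_{>0}$ as spectra. I expect this last part, and the cofinality comparison between pure $\otimes$-stabilization and the mixed system in the homology computation, to be the main obstacles. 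For the first I would try first to show that the homology of $T$ is already a $\mathbb{Q}$-vector space in positive degrees, via the transfer/multiplication-by-$d$ argument.
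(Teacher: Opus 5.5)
Your outline follows the paper's architecture: a plus-construction model $T^+$ for $\tau_{>0}K(\operatorname{Free}_\otimes(R))$ with $T$ the factorial tensor telescope, permutation-conjugacy of $A\otimes I_d$ and $A^{\oplus d}$ to make the mixed $\oplus/\otimes$ system commute on homology, a map into the telescope of $B\operatorname{GL}(R)^+$ along $\cdot d_k$, and Whitehead for simple spaces. But the step that carries all the content is asserted rather than proved, and the reason you give for it does not work.

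The copy of $T$ sitting diagonally in your double system (with $\operatorname{GL}_{k!}$ in the $k$-th column) is not cofinal. An object $\operatorname{GL}_m$ in column $k$ with $m>k!$ admits no map into it, because tensor maps multiply the size and advance the column in lockstep. Write $\Sigma_n$ for the shifted diagonal with $\operatorname{GL}_{n!\,k!}(R)$ in column $k$. Then the double colimit is $\operatorname{colim}_n\operatorname{colim}H_*(\Sigma_n)$. Each $\operatorname{colim}H_*(\Sigma_n)$ is again a copy of $H_*(T)$, but the passage from $\Sigma_n$ to $\Sigma_{n+1}$ is by the $\oplus$-stabilizations $\operatorname{GL}_{n!\,k!}\to\operatorname{GL}_{(n+1)!\,k!}$. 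So you must show that $\operatorname{colim}H_*(\Sigma_1)\to\operatorname{colim}_n\operatorname{colim}H_*(\Sigma_n)$ is bijective, which means controlling unstable $\oplus$-stabilizations after tensoring.

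Conjugacy of $A\otimes I_d$ with a block sum, and triviality of inner automorphisms on homology, only give commutativity of the squares; they say nothing about this. In degree $1$, Bass handles it with the Whitehead lemma, $A\oplus A\equiv A^2\oplus 1$ modulo elementary matrices. As the paper remarks, there is no evident higher-degree analog, since $A\mapsto I_k\otimes A$ and $A\mapsto\operatorname{diag}(A^k,1,\dots,1)$ are not freely homotopic on classifying spaces. So ``working stably'' circumvents nothing. Your fallback of proving $\tilde H_*(T)$ rational would still leave the same comparison to be made.

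The missing idea is the paper's two-step reduction.
\begin{itemize}
\item If $R$ is homologically stable, then in each degree $j$ the maps $H_j(\operatorname{GL}_{n!\,k!})\to H_j(\operatorname{GL}_{(n+1)!\,k!})$ are isomorphisms for $k$ large. Hence every transition $\operatorname{colim}H_*(\Sigma_n)\to\operatorname{colim}H_*(\Sigma_{n+1})$ is an isomorphism.
\item For arbitrary $R$, write $R$ as the filtered colimit of its finitely generated $\mathbb{Z}$-subalgebras. These are Noetherian of finite Krull dimension, hence homologically stable. The comparison map is natural in $R$, and both sides commute with filtered colimits of rings because $\operatorname{GL}_m(-)$, $B$ and $H_*$ do.
\end{itemize}

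For the infinite loop statement, your $SL_1$/logarithm route is an unneeded detour. Its first step, identifying $\tau_{>0}K(\operatorname{Free}_\otimes(R))$ with a unit component as infinite loop spaces, requires exactly the multiplicative machinery the paper avoids. Your own uniqueness remark essentially suffices. Once the space-level equivalence is known, the spectrum of $\tau_{>0}K(\operatorname{Free}_\otimes(R))$ has homotopy groups $K_i(R)\otimes\mathbb{Q}$. So it and the rationalized spectrum of $\tau_{>0}K(R)$ are rational spectra with isomorphic homotopy groups. Both therefore split as wedges of Eilenberg--MacLane spectra and are equivalent, with no comparison map needing to be constructed.
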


In Section~\ref{sec::plus}, we will define more formally what rationalization (and certain localizations) of infinite loop spaces are. Here we explain some more contexts and motivations behind Theorem~\ref{thm::k_theory_rationalization}.

\begin{remark}
Theorem~\ref{thm::k_theory_rationalization} can be deduced from Proposition 5 of \cite{c80f3555-900f-35b5-ab05-6d397e6a44e6}, which states that $K_i(\operatorname{FP}) = K_i(R) \otimes \mathbb{Q}$ for $i \geq 1$ for any commutative ring $R$, where $\operatorname{FP}$ denotes the category of faithfully projective $R$-modules with the symmetric monoidal operation of tensor product. This fact was also pointed out in \cite{951cf383-5a4d-3775-b8b2-0bafd2f162fe}. Weibel's papers do not include a proof, but they both indicate that it follows from May's book \textit{$E_{\infty}$ Ring Spaces and $E_{\infty}$ Ring Spectra} \cite{May1977EInfinityRingSpaces}, Theorem VII.5.3 therein (see also Theorem 10.2 of \cite{may2009preciselyeinftyringspaces}). Theorem VII.5.3 of \cite{May1977EInfinityRingSpaces} requires a certain homological stability condition called being \textit{convergent}. Furthermore, the theorem requires some technical corrections, as explained in Remarks 8.7 of a follow-up work \cite{MAY19821}, which also mentions the assumption of a homological stability condition.\\

\noindent May’s result, while more general, requires technical machinery from operads and homology of $E_{\infty}$-ring spaces and assumes a homological stability condition. Our methods for rings with homological stability does not use the full machinery of multiplicative infinite loop space theory used in May's proof of the localization theorem. We also write down an explicit way to generalize this to rings that do not satisfy the appropriate homological stability conditions. To the best of our knowledge, an explicit proof for Theorem~\ref{thm::k_theory_rationalization} had never been written down, and the statement does not appear to be well-known in the $K$-theory community. We hope it would be beneficial to provide a proof here.
\end{remark}

\begin{remark}
    In Theorem IX.7.3 of \cite{bass1968algebraic}, Bass proved $K_1(\operatorname{Free}_{\otimes}(R)) = K_1(R) \otimes \Qbb$ without any assumption on $R$. Bass's proof relied on the crucial fact that in sufficiently large dimensions, the maps $f: A \mapsto \operatorname{diag}(A^k, 1, ..., 1)$ and $g: A \mapsto \operatorname{diag}(A, ..., A) =: I_{k} \otimes A$ are the same modulo commutators. This implies they induce the same maps on $H_1(\operatorname{BGL}_n(R)) \to H_1(\operatorname{BGL}_{kn}(R))$. However, it seems unlikely that $f$ and $g$ would induce the same map in higher homology groups, as the maps $Bf, Bg: \operatorname{BGL}_{n}(R) \to \operatorname{BGL}_{kn}(R)$ are not freely homotopic.
\end{remark}

\begin{remark}
In the recent paper \cite{ji2026quantumcellularautomatagroup}, Ji and Yang construct a multiplicative version of negative $K$-theory that deloops the $K$-theory of the symmetric monoidal category $\operatorname{Az}(R)$ of Azumaya $R$-algebras. Furthermore, they show that this delooping is intricately related to the theory of (algebraic) \textit{quantum cellular automata} (QCA, see  \cite{farrelly2020review}, Section 2 of \cite{ji2026quantumcellularautomatagroup}, or Appendix A of \cite{Yang_2026}). The computation for the higher homotopy groups of $K(\operatorname{Az}(R))$ involves the use of  Theorem~\ref{thm::k_theory_rationalization}. The method to compute $K_i(\operatorname{Az}(R))$ using Theorem~\ref{thm::k_theory_rationalization} was originally carried out in \cite{c80f3555-900f-35b5-ab05-6d397e6a44e6} and partially explained in the expository Section 6 of \cite{ji2026quantumcellularautomatagroup}, which also included an exposition on why $K_1(\operatorname{Free}_{\otimes}(R)) = K_1(R) \otimes \Qbb$ for an Euclidean domain $R$. In light of the theme of QCA in \cite{ji2026quantumcellularautomatagroup}, Theorem~\ref{thm::k_theory_rationalization} may also be of independent interest in quantum physics.
\end{remark}

\subsection{$K$-theory and $p$-Perfection} The strategy here to prove Theorem~\ref{thm::k_theory_rationalization} is also quite flexible and generalizable. As a proof of concept, we will first prove the $p$-perfection (also known as localization away from $p$, or the $p$-telescope) version of Theorem~\ref{thm::k_theory_rationalization} and then see how to prove Theorem~\ref{thm::k_theory_rationalization} analogously. To be precise, we consider the following category.

\begin{definition}
Let $p$ be a prime number. We use $\operatorname{Free}_{\otimes}^p(R)$ to denote the full subcategory of $\operatorname{Free}_{\otimes}(R)$ on $R^1, R^p, R^{p^2}, R^{p^3}, ...$. This is a symmetric monoidal subcategory of $\operatorname{Free}_{\otimes}(R)$.
\end{definition}

\begin{introthm}\label{thm::k_theory_p_perfection}
The space $\tau_{>0} K(\operatorname{Free}_{\otimes}^p(R))$ is equivalent to $(\tau_{>0} K(R))[\frac{1}{p}]$.
\end{introthm}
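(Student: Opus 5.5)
We will identify both sides through the group-completion theorem combined with a plus construction. The monoidal category $\operatorname{Free}_{\otimes}^p(R)$ is the disjoint union of the groupoids $B\operatorname{GL}_{p^k}(R)$ for $k\ge 0$. The submonoid of $\pi_0$ generated by $R^p$ is cofinal and isomorphic to $\mathbb{N}$. The McDuff--Segal group-completion theorem therefore identifies $\tau_{>0}K(\operatorname{Free}_{\otimes}^p(R))$ with the plus construction of the telescope
\[ B\operatorname{GL}_1(R)\xrightarrow{\;R^p\otimes-\;} B\operatorname{GL}_p(R)\xrightarrow{\;R^p\otimes -\;} B\operatorname{GL}_{p^2}(R)\to\cdots, \]
that is, with $(\operatorname{colim}_k B\operatorname{GL}_{p^k}(R))^+$. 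The plus construction is taken with respect to the commutator subgroup of $G:=\operatorname{colim}_k\operatorname{GL}_{p^k}(R)$, which is perfect by the usual Whitehead-lemma argument, since the stabilizing maps $A\mapsto I_p\otimes A$ contain block sums. To use the theorem we need the relevant localized homology to be computed by the colimit. In the tensor setting the monoid $\pi_0$ acts by $R^{p}\otimes-$, so $H_*(\text{group completion})$ is $\mathbb{Z}[\pi_0][\pi_0^{-1}]\otimes H_*(\operatorname{colim})$, and the telescope model holds.

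The second step compares this with $\tau_{>0}K(R)=B\operatorname{GL}(R)^+$. Define a map of telescopes $\operatorname{GL}_{p^k}(R)\to\operatorname{GL}(R)$ that sends the $k$-th stage by the standard inclusion. This is not compatible with the bonding maps: the square involves $A\mapsto I_p\otimes A$ on one side and the inclusion on the other. After plus construction, however, $I_p\otimes A$ is conjugate to the block sum $A\oplus\cdots\oplus A$, and in the H-space $B\operatorname{GL}(R)^+$ the block sum of $p$ copies represents the multiplication-by-$p$ map. So on $\tau_{>0}K(R)=B\operatorname{GL}(R)^+$ the bonding maps become $p\cdot$. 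The homotopy colimit of $B\operatorname{GL}(R)^+\xrightarrow{p}B\operatorname{GL}(R)^+\xrightarrow{p}\cdots$ is precisely the $p$-perfection $(\tau_{>0}K(R))[\tfrac1p]$, with homotopy groups $\pi_*\otimes\mathbb{Z}[\tfrac1p]$. Making the squares commute up to coherent homotopy is handled by working on homology, or on homotopy groups of plus constructions, where only homotopy-commutativity stagewise is needed.

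The key comparison is then
\[ H_*(\operatorname{colim}_k B\operatorname{GL}_{p^k}(R)) \cong \operatorname{colim}\big(H_*(B\operatorname{GL}(R))\xrightarrow{(I_p\otimes-)_*} \cdots\big). \]
This is the \textbf{main obstacle}, because it requires replacing the finite groups $\operatorname{GL}_{p^k}$ by the stable group $\operatorname{GL}$. Under homological stability for $R$ (the map $H_i(B\operatorname{GL}_n)\to H_i(B\operatorname{GL})$ is an isomorphism for $n\gg i$), the comparison is immediate, since the $p^k$ grow. Without stability, we would instead use the double telescope: interleave the stabilization $n\mapsto n+1$ with tensoring, using the maps $\operatorname{GL}_{p^k}\to\operatorname{GL}_{p^k m}$, and check cofinality so that both colimits compute the same homology.

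Given the homology comparison, the map of plus constructions is a homology isomorphism between simple spaces, since both are H-spaces, by $p$-local Whitehead. It is therefore a weak equivalence onto $(\tau_{>0}K(R))[\tfrac1p]$. Concretely, $\pi_*$ of $(\operatorname{colim}B\operatorname{GL}_{p^k})^+$ is $\operatorname{colim}(K_*(R)\xrightarrow{p}\cdots)=K_*(R)[\tfrac1p]$.
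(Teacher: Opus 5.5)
Your argument is correct, and essentially the paper's, when $R$ is homologically stable. The ingredients all match the paper's:
\begin{itemize}
\item the identification $\tau_{>0}K(\operatorname{Free}^p_{\otimes}(R))\simeq B\operatorname{GL}^p_{\otimes}(R)^+$;
\item the ladder $B\operatorname{GL}_{p^k}(R)\to B\operatorname{GL}(R)^+$, commuting with $\cdot p$ up to permutation conjugation;
\item the fact that stagewise homotopies suffice to map sequential telescopes;
\item the Whitehead theorem for simple spaces (the integral version; nothing $p$-local is needed).
\end{itemize}
In the stable case your ``levelwise eventually an isomorphism'' argument is even a little more direct than the paper's double diagram.

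\textbf{The gap is the general case.} The statement covers every commutative ring, and stability genuinely fails for some (e.g.\ $C([0,1]^{\mathbb{N}})$, as the paper remarks). A double telescope with a cofinality check cannot close this gap. Index the double diagram by $(\ell,n)$, with $\ell$ the stage of the $\cdot p$ telescope and $n$ the matrix size. Stabilization $(\ell,n)\to(\ell,n+1)$ and tensoring $(\ell,n)\to(\ell+1,pn)$ never decrease the ``density'' $n/p^{\ell}$. So the tensor chains $\Sigma_k=\{(\ell,p^{k+\ell})\}_{\ell\ge 0}$, which have constant density $p^k$, are never cofinal. Cofinality only gives
\[
\operatorname{colim}_{(\ell,n)} H_*(B\operatorname{GL}_n(R))\cong\operatorname{colim}\big(H_*(B\operatorname{GL}^p_{\otimes}(R))\xrightarrow{T_*}H_*(B\operatorname{GL}^p_{\otimes}(R))\xrightarrow{T_*}\cdots\big).
\]
Here $T_*$ is induced levelwise by $A\mapsto A\oplus I_{(p-1)p^m}$, $\operatorname{GL}_{p^m}(R)\to\operatorname{GL}_{p^{m+1}}(R)$. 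This homomorphism is not conjugate to the structure map $A\mapsto I_p\otimes A$. Showing that $T_*$ is an isomorphism is a genuine stability statement, and it is exactly where the paper uses homological stability. No choice of interleaving makes it formal.

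\textbf{The missing idea} is the paper's reduction to the stable case. Write $R$ as the filtered colimit of its finitely generated subrings $R_i$. These are Noetherian of finite Krull dimension, hence homologically stable (van der Kallen). Both sides of your key comparison commute with filtered colimits of rings, because $\operatorname{GL}_n$, $B$, and $H_*$ do. The two sides are $\operatorname{colim}_k H_*(B\operatorname{GL}_{p^k}(-))$ and $\operatorname{colim}_\ell H_*(B\operatorname{GL}(-))$ along $(\cdot p)_*$. The comparison map is natural in $R$, so the isomorphism for each $R_i$ passes to $R$. The rest of your argument then goes through unchanged.
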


This statement also appears in \cite{ramzi2025pperfectiongroupcompletionmathbbeinftymonoids} as a consequence of their more general theorem for $p$-perfection of commutative monoids.

\subsection{Generalizations to Multiplicative Localizations} A subset $S \subseteq \Zbb_{> 0}$ is (non-trivially) \textit{multiplicatively closed} if $\{1\} \subsetneq S$ and for any $a, b \in S$, their product $ab$ is in $S$. For any multiplicatively closed subset $S \subseteq \Zbb_{>0}$, we can obtain a localization of $\tau_{>0} K(R)$ at $S$ denoted $(\tau_{>0} K(R))[S^{-1}]$. The rationalization and $p$-perfect of $\tau_{>0} K(R)$ are special cases of localization at the set $S = \Zbb_{>0}$ and $S = \{1, p^1, p^2, p^3, ...\}$ respectively. Theorem~\ref{thm::k_theory_rationalization} and~\ref{thm::k_theory_p_perfection} can be generalized to the following statement about multiplicative localizations.

\begin{definition}\label{def::s_cat}
    Let $S \subseteq \Zbb_{>0}$ be a multiplicatively closed subset.  $\operatorname{Free}^{S}_{\otimes}(R)$ is the full subcategory of $\operatorname{Free}_{\otimes}(R)$ consisting of $R^s$ for all $s \in S$. This is a symmetric monoidal subcategory of $\operatorname{Free}_{\otimes}(R)$.
\end{definition}

\begin{introthm}\label{thm::k_theory_localization}
The space $\tau_{>0} K(\operatorname{Free}_{\otimes}^S(R))$ is equivalent to $(\tau_{>0} K(R))[S^{-1}]$.
\end{introthm}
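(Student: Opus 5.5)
We will prove the statement via the group-completion theorem together with a plus-construction description of the target, modelled on the classical identification $\tau_{>0}K(R)\simeq \operatorname{BGL}(R)^+$. Write $M=\coprod_{s\in S}\operatorname{BGL}_s(R)$, the classifying space of $\operatorname{Free}^S_\otimes(R)$; it is a homotopy commutative topological monoid under $\otimes$ with $\pi_0 M=S$. By the McDuff--Segal group completion theorem, $\tau_{>0}K(\operatorname{Free}^S_\otimes(R))$ is the plus construction of the telescope
\[
M_\infty=\operatorname{hocolim}\bigl(\operatorname{BGL}_{s_1}(R)\xrightarrow{\ \cdot\otimes R^{s_2}\ }\operatorname{BGL}_{s_1s_2}(R)\xrightarrow{\ \cdot\otimes R^{s_3}\ }\cdots\bigr),
\]
where $s_1,s_2,\dots\in S$ is chosen cofinal for divisibility in $S$, i.e.\ every $s\in S$ divides some partial product. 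The stabilization maps are $A\mapsto A\otimes I_{s_k}$, which is conjugate to the block-diagonal map $A\mapsto \operatorname{diag}(A,\dots,A)$.

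The key step is to show that each such map becomes multiplication by $s_k$ after stabilizing along direct sums. Let $\iota\colon \operatorname{BGL}_n(R)\to \operatorname{BGL}(R)^+$ be the standard map. Since $\operatorname{BGL}(R)^+$ is an H-space under block sum and $\operatorname{diag}(A,\dots,A)$ is the $s$-fold block sum, the composite $\operatorname{BGL}_n(R)\xrightarrow{\Delta_s}\operatorname{BGL}_{sn}(R)\to \operatorname{BGL}(R)^+$ is homotopic to $s\cdot\iota$, the $s$-fold H-space power of $\iota$. Here the block-permutation conjugations are absorbed because inner automorphisms by elements of $\operatorname{GL}(R)$ act trivially on $\operatorname{BGL}(R)^+$ up to homotopy. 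Consequently the telescope maps into $\operatorname{hocolim}(\operatorname{BGL}(R)^+\xrightarrow{s_2}\operatorname{BGL}(R)^+\xrightarrow{s_3}\cdots)$, and this colimit is $(\tau_{>0}K(R))[S^{-1}]$.

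Next we compare homology. Working with $\operatorname{BGL}(R)$ we use homology rather than spaces: we need $H_*(M_\infty)\cong H_*(\operatorname{BGL}(R))[S^{-1}]$ with $\mathbb Z[S^{-1}]$-coefficients behaving. This is the main obstacle. The map $\operatorname{BGL}_n\to\operatorname{BGL}_{sn}$ factors through $\operatorname{BGL}_n^{\times s}$, and its effect on homology is the $s$-fold Pontryagin power of the diagonal, not $s$ times the class. The group-like H-space structure on $\operatorname{BGL}(R)^+$, where the $s$-th power map induces multiplication by $s$ on homotopy, is what we need. So we do not argue on raw homology of $\operatorname{BGL}_n$.

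Instead we will construct a map of spaces $f\colon M_\infty^+\to \operatorname{tel}(\operatorname{BGL}(R)^+,s_k)$ and check that it is an isomorphism on homotopy groups. To build it, we choose compatible homotopies by working in Segal's $\Gamma$-space model of $K(\operatorname{Free}_\oplus(R))$, which gives strict coherence. We then prove injectivity and surjectivity on $\pi_i$ for $i\geq 1$. For surjectivity, any class $x/s$ is hit by $x\in\pi_i\operatorname{BGL}_n(R)^+$ placed at stage $s$. For injectivity, we exhibit an inverse up to localization: the direct-sum inclusion $\operatorname{BGL}_n\to\operatorname{BGL}_{sn}$, $A\mapsto\operatorname{diag}(A,1,\dots,1)$, is a homology isomorphism in the limit, and comparing it with $\Delta_s$ via the H-space formula shows that the kernel consists of $S$-torsion, which dies in the colimit. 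Both $M_\infty^+$ and the target are simple, so Whitehead applies. If the homotopy groups of $M_\infty^+$ need to be identified without stability, we will pass to $\operatorname{GL}(R)$ by taking an additional colimit over direct sum, which commutes with the tensor telescope; this is how we would handle rings without homological stability.

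For the rational case, $S=\mathbb Z_{>0}$, this specializes to Theorem~\ref{thm::k_theory_rationalization}, and the case $S=\{p^k\}$ gives Theorem~\ref{thm::k_theory_p_perfection}.
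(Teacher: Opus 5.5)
Your setup matches the paper. The McDuff--Segal identification $\tau_{>0}K(\operatorname{Free}^S_\otimes(R))\simeq M_\infty^+$ is the paper's plus-construction lemma (via Weibel's $\operatorname{Aut}(\mathcal S)$ result), and your map of telescopes is built the same way. For a sequential homotopy colimit, homotopy-commutative squares already suffice, so the $\Gamma$-space coherence is unnecessary.

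\textbf{The gap is the $\pi_*$-comparison, above all injectivity.}
\begin{itemize}
\item Your only computational input, $\Delta_s\simeq s\cdot\iota$, holds only after composing into $\operatorname{BGL}(R)^+$. It constrains the target and says nothing about $\pi_i(M_\infty^+)$, which you never describe.
\item The direct-sum inclusions do not assemble into a map between $M_\infty$ and $\operatorname{BGL}(R)$ in either direction, because they are incompatible with the Kronecker transition maps. So there is no ``inverse up to localization'' to compare with.
\item Even granting that $\ker f_*$ is $S$-torsion, you would still need $\pi_i(M_\infty^+)$ to be $S$-torsion-free. That is part of the conclusion.
\end{itemize}
What must actually be shown is this: a class in $\pi_i(\operatorname{BGL}_{A_k}(R)^+)$, with $A_k=s_1\cdots s_k$, whose image in $K_i(R)$ is killed by some $c\in S$ must die along the tensor telescope. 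This includes anything in the unstable kernel of $\pi_i(\operatorname{BGL}_n(R)^+)\to K_i(R)$. Your closing claim that the direct-sum colimit ``commutes with the tensor telescope'' just asserts this.

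Surjectivity has a milder version of the same problem. A class represented only in $\operatorname{BGL}_n(R)^+$ with $n>A_k$ cannot simply be ``placed at stage $k$''. You must rewrite $[x]_k=[cx]_m$ and use that $cx$ is still represented in $\pi_i(\operatorname{BGL}_n(R)^+)$.

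\textbf{How the paper closes it.} It works in homology, where the obstacle you name does not arise. Since $\cdot s$ on $\operatorname{BGL}(R)^+$ is induced by the group map $I_s\otimes(-)$, the homology of the target telescope is a colimit of $H_*(\operatorname{BGL}_n(R))$ over a two-dimensional grid of inclusions and Kronecker maps. This grid commutes up to conjugation by permutation matrices. The tensor telescope is a diagonal of the grid, so $(I_s\otimes(-))_*$ never has to be computed. Homological stability makes the shifts between diagonals isomorphisms. Arbitrary $R$ is handled as a filtered colimit of finitely generated subrings, and Whitehead for simple spaces finishes.

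\textbf{How your route could be repaired.} Use an \emph{unstable} H-space formula. Take $X_n=\operatorname{BGL}_n(R)^+$ formed with respect to $E_n(R)$, and let $j$ be the direct-sum inclusion. Then $(I_s\otimes(-))_*=s\cdot j_*$ on $\pi_i(X_n)\to\pi_i(X_{sn})$. The reason is that the permutation matrices involved are congruent modulo $E_{sn}(R)$ to $\operatorname{diag}(1,\dots,1,\pm1)$, which centralizes $j(\operatorname{GL}_n(R))$ and so acts trivially. Combine this with $\pi_i(M_\infty^+)=\operatorname{colim}_k\pi_i(X_{A_k})$ and $K_i(R)=\operatorname{colim}_n\pi_i(X_n)$. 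Surjectivity and injectivity then reduce to a short colimit manipulation, which would not even need homological stability. As written, though, neither ingredient appears, so the equivalence is not established.
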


Although Theorem~\ref{thm::k_theory_rationalization} and~\ref{thm::k_theory_p_perfection} will directly follow from Theorem~\ref{thm::k_theory_localization}, for the ease of presentation and understanding, we will prove those two special cases first to motivate how to generalize. Theorem~\ref{thm::k_theory_localization} can also be obtained from Theorem VII.5.3 in \cite{May1977EInfinityRingSpaces} under a homological stability assumption on $R$. We again provide an explicit proof of this for all rings. To the best of our knowledge, a proof for Theorem~\ref{thm::k_theory_localization} had never been explicitly written down.

\subsection{Relations to the Plus Construction}\label{sec::plus_relation} Although we motivated the topic of this paper using Segal's $K$-theory, we could have also done this in the language of Quillen's plus construction. In fact, our proofs of Theorem~\ref{thm::k_theory_rationalization}, ~\ref{thm::k_theory_p_perfection}, and~\ref{thm::k_theory_localization} are done in the language of plus constructions.

For a path-connected space $X$ and its maximal perfect normal subgroup $N \subseteq \pi_1(X)$, Quillen's plus construction is a pair $(X^+, i: X \to X^+)$ such that (a) $i$ is an isomorphism on homology in any local coefficient systems on $X^+$ and (b) $N = \ker(i_*: \pi_1(X) \to \pi_1(X^+))$. The map $i$ is initial among all maps $f: X \to Y$ such that $N \subseteq \ker(f_*: \pi_1(X) \to \pi_1(Y))$. For the usual $K$-theory space $K(R)$, \cite{quillen_2} showed that $\tau_{>0} K(R) \simeq \operatorname{BGL}(R)^+$,
where $\operatorname{GL}(R) = \operatorname{colim}_{n \geq 1} \operatorname{GL}_n(R)$ with transition maps $i: \operatorname{GL}_n(R) \to \operatorname{GL}_{n+1}(R)$ being the inclusion of a matrix $A$ to the block diagonal matrix $\begin{pmatrix}
    A & 0\\
    0 & 1
\end{pmatrix}$.

The spaces $\tau_{>0} K(\operatorname{Free}^p_{\otimes}(R))$, $\tau_{>0} K(\operatorname{Free}_{\otimes}(R))$ and $\tau_{>0} K(\operatorname{Free}_{\otimes}^S(R))$ also admit similar plus construction descriptions (see Lemma~\ref{lem::plus_construction_des}). Thus, Theorem~\ref{thm::k_theory_rationalization},~\ref{thm::k_theory_p_perfection}, and~\ref{thm::k_theory_localization} can also be seen as statements about the plus-construction versions of tensor product $K$-theory to the usual algebraic $K$-theory.\\

\noindent \textbf{Outline.} In Section~\ref{sec::plus}, we will define the localization of an infinite loop space at $S$, recast Theorem~\ref{thm::k_theory_rationalization},~\ref{thm::k_theory_p_perfection}, and~\ref{thm::k_theory_localization} to statements about plus-construction spaces and explain a homological stability condition on $R$ that will be used later in proofs. In Section~\ref{sec::p_perfection}, we will prove Theorem~\ref{thm::k_theory_p_perfection}. In Section~\ref{sec::rationalization}, we will prove Theorem~\ref{thm::k_theory_rationalization}. In Section~\ref{sec::k_theory_localization}, we will generalize the methods used to prove Theorem~\ref{thm::k_theory_localization}.\\

\noindent \textbf{Acknowledgements.} MJ would like to thank Mark Behrens, Shane Kelly, Peter May, Mona Merling, Charles Weibel, and Bowen Yang for helpful conversations, and again to Bowen Yang for the question that motivated this project. ASD would like to thank Lennart Meier and Shachar Carmeli for their support and encouragement. Both authors thank Maxime Ramzi for helpful conversations. MJ is partially supported by the National Science Foundation Graduate Research Fellowship (DGE-2236662). ASD acknowledges the support of the National Institute of Science Education and Research (NISER) and Homi Bhabha National Institute (HBNI), Mumbai. \\

\noindent \textbf{Notations.} Throughout for a group $G$, the classifying space $BG$ is the Eilenberg-MacLane space $K(G, 1)$, which is characterized by the property that $\pi_1(BG) = G$ and $\pi_k(BG) = 0$ for $k \neq 1$. Unless mentioned otherwise, $H_*$ denotes integral homology.

\section{Preliminaries: Localizations, Plus-Constructions, and Homological Stability}\label{sec::plus}

\subsection{Localizations} 

Let $S \subseteq \mathbb{Z}_{> 0}$ be a multiplicatively closed subset and $X$ be an infinite loop space. Here we make precise what the infinite loop space $X[S^{-1}]$ means. Note that this construction can be reasonably adapted to an $H$-space (see Proposition 6.6.1 of \cite{may2011more}).

The infinite loop space $X$ is equipped with a binary operation $\mu: X \times X \to X$.
\begin{definition}
    Let $n \in \Zbb_{>0}$, the map $\cdot n: X \to X$ is the $n$-th fold power given as the composition:
    \[X \xrightarrow{\Delta} X^n \xrightarrow{\mu_n} X,\]
    where $\Delta$ is the diagonal map and $\mu_n: X^n \to X$ is the $n$-fold evaluation map with $\mu$ by associating the products in a fixed order.
\end{definition}


\begin{definition}\label{def::localization}
Write the elements of $S$ in increasing order as $1 = a_1 < a_2 < a_3 < a_4 < ... < ...$ The localization of $X$ at $S$ is
\[X[S^{-1}] \coloneqq \operatorname{colim}(X \xrightarrow{\cdot a_2} X \xrightarrow{\cdot a_3} X \xrightarrow{\cdot a_4} ...).\]
In special cases, we write the rationalization and $p$-perfection of $X$ as 
$$X \otimes \mathbb{Q} \coloneqq X[\Zbb_{>0}^{-1}] \text{ and } X[p^{-1}] \coloneqq X[\{1, p, p^2, ...\}^{-1}] \cong \operatorname{colim}(X \xrightarrow{\cdot p} X \xrightarrow{\cdot p} X \xrightarrow{\cdot p }...).$$
\end{definition}

\begin{remark}
Since $X$ is an infinite loop space, the multiplication maps $\cdot n: X \to X$ for $n \in \mathbb{Z}_{>0}$ form coherently multiplicative system, being built from the $E_\infty$-structure on $X$. Thus they determine a homotopy coherent diagram indexed by the filtered poset $S$ (under divisibility), and the homotopy colimit of infinite loop space
\[
X[S^{-1}] \coloneqq \operatorname{hocolim}_{s \in S} X
\]
is well-defined. The underlying space is the usual homotopy colimit of spaces, as the forgetful functor from infinite loop space to spaces preserve filtered colimits (see the proof of Proposition B.8 of \cite{Gepner_2015}). The sequential description in Definition~\ref{def::localization} is obtained by restricting this homotopy colimit to a cofinal chain in $S$.
\end{remark}

\begin{example}
The case of particular interest to us is when $X = \tau_{>0} K(R)$. The definitions above allow us to make sense of $\tau_{>0} K(R) \otimes \mathbb{Q}, (\tau_{>0}K(R))[p^{-1}]$, and $(\tau_{>0} K(R))[S^{-1}]$.
\end{example}

\subsection{Plus-Constructions}
Here we interpret the spaces $\tau_{>0} K(\operatorname{Free}_{\otimes}(R))$, $\tau_{>0} K(\operatorname{Free}_{\otimes}^p(R))$, and $\tau_{>0} K(\operatorname{Free}^{S}_{\otimes}(R))$ as plus construction spaces.

\begin{definition}
For natural numbers $n = kd$, we write the map
\[\phi_{d, n}: \operatorname{GL}_{d}(R) \to \operatorname{GL}_{n}(R), A \mapsto I_k \otimes A.\]
Here $I_k \otimes A$ denotes the Kronecker product of the identity matrix $I_k$ with $A$, which is the block diagonal matrix of $k$ copies of $A$. Let $(\Zbb_{>0}, |)$ denote the poset of positive numbers ordered by divisibility, the maps $\phi_{d, n}$ form a directed system over this poset. We write:
\begin{enumerate}
    \item The group $\operatorname{GL}_{\otimes}(R) = \operatorname{colim}_{(\Zbb_{>0}, |)} \operatorname{GL}_{n}(R)$ to be the colimit of this directed system.
    \item The group $\operatorname{GL}^p_{\otimes}(R) = \operatorname{colim}_{k \geq 0} \operatorname{GL}_{p^k}(R)$ to be the colimit of the sub-directed system on $\{1, p, p^2, ... \}$.
    \item For a multiplicatively closed $S \subset \Zbb_{> 0}$, $\operatorname{GL}^S_{\otimes}(R) = \operatorname{colim}_{S} \operatorname{GL}_{p^k}(R)$ to be the colimit of the sub-directed system on $S$.
\end{enumerate}
\end{definition}

Recall a path-connected space $X$ is \textit{simple} if it has abelian fundamental group and the canonical action of $\pi_1(X)$ on $\pi_n(X)$ (see Theorem 7.3.8 of \cite{Spanier1966}) is trivial for $n \geq 1$. Path-connected $H$-spaces (e.g., path-connected infinite loop spaces) are simple (see Theorem 7.3.9 of \cite{Spanier1966}).

\begin{lemma}\label{lem::plus_construction_des}
We have equivalences $\operatorname{BGL}_{\otimes}(R)^+ \simeq \tau_{>0} K(\operatorname{Free}_{\otimes}(R))$, $\operatorname{BGL}_{\otimes}^p(R)^+ \simeq \tau_{>0} K(\operatorname{Free}_{\otimes}^p(R))$, and $\operatorname{BGL}^S_{\otimes}(R)^+ \simeq \tau_{>0} K(\operatorname{Free}^S_{\otimes}(R))$. In particular, $\operatorname{BGL}_{\otimes}(R)^+,$ $\operatorname{BGL}_{\otimes}^p(R)^+,$ and $\operatorname{BGL}_{\otimes}^S(R)^+$ are infinite loop spaces and hence have abelian $\pi_1$ and are simple. 
\end{lemma}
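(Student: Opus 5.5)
We will derive the lemma from the group-completion theorem of Quillen, in the form of McDuff--Segal, applied to the classifying space of the symmetric monoidal groupoid. Write $\mathcal{C}$ for any of $\operatorname{Free}_{\otimes}(R)$, $\operatorname{Free}^p_{\otimes}(R)$, $\operatorname{Free}^S_{\otimes}(R)$, and let $M = B\mathcal{C} = \coprod_{s} \operatorname{BGL}_s(R)$. The disjoint union runs over $s \in \Zbb_{>0}$, over $\{1,p,p^2,\dots\}$, or over $S$, respectively. Tensor product makes $M$ a homotopy commutative topological monoid, in fact an $E_\infty$-space, with $\pi_0(M)$ the multiplicative monoid of such $s$. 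Segal's construction gives $K(\mathcal{C}) \simeq \Omega B M$, and it carries its canonical infinite loop structure. The group completion theorem says that if $\pi_0(M)$ is localized by a cofinal sequence of elements, then
\[H_*(M)[\pi_0(M)^{-1}] \cong H_*(\Omega BM).\]
We will apply this with $\pi_0$ generated under divisibility by a cofinal chain $s_1 \mid s_2 \mid \cdots$. For example, take $s_j = j!$ for $\Zbb_{>0}$, take $s_j = p^j$, or take products of the first $j$ elements of $S$.

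The key steps, in order, are as follows. (1) Form the telescope $M_\infty = \operatorname{hocolim}(M \xrightarrow{\cdot s_2/s_1} M \xrightarrow{\cdot s_3/s_2} \cdots)$, where multiplication by the object $R^{k}$ is used. On the component $\operatorname{BGL}_d(R)$, tensoring with $R^k$ is $B$ of $A \mapsto I_k \otimes A$ or $A \mapsto A \otimes I_k$. These two maps differ by conjugation by a permutation matrix, so they are homotopic after applying $B$; we will use $\phi_{d,kd}$. Restricting to the unit component gives $M_\infty \simeq \pi_0(M)[\pi_0^{-1}] \times \operatorname{hocolim}_j \operatorname{BGL}_{s_j}(R)$. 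The hocolim is $\operatorname{BGL}^{\bullet}_{\otimes}(R)$, since $B$ commutes with filtered colimits of groups and the chain is cofinal in the directed system. (2) By group completion, $H_*(M_\infty) \cong H_*(\Omega BM)$, and so we get a homology equivalence $\operatorname{BGL}^{\bullet}_{\otimes}(R) \to \tau_{>0}K(\mathcal{C})$ on the base-point components. Here we use the standard McDuff--Segal refinement, which produces an actual map $M_\infty \to \Omega BM$ inducing the isomorphism. (3) The target is an infinite loop space, so its $\pi_1$ is abelian. The commutator subgroup of $\operatorname{GL}^{\bullet}_{\otimes}(R)$ is perfect, because it is a colimit of groups $\operatorname{GL}_n(R)$ whose commutator subgroups stabilize to perfect $E$-type subgroups (see below). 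Hence the universal property of the plus construction factors the map through $\operatorname{BGL}^{\bullet}_{\otimes}(R)^+$. The induced map is a homology isomorphism between simple spaces, so it is a weak equivalence by Whitehead's theorem for simple (nilpotent) spaces. (4) The final assertions then follow: infinite loop spaces have abelian $\pi_1$, and connected $H$-spaces are simple, as recalled above.

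The main obstacle is step (3): identifying the commutator subgroup of $\operatorname{GL}^{\bullet}_{\otimes}(R)$ as perfect, so that the plus construction is with respect to it and the maximal perfect normal subgroup is exactly the kernel on $\pi_1$. I would first try to avoid the issue. Since the target has abelian $\pi_1$ and the map is a homology isomorphism, it induces $H_1$, that is, the abelianization, isomorphically onto $\pi_1$. So the kernel on $\pi_1$ is the commutator subgroup $[G,G]$. The McDuff--Segal theorem also gives an acyclic map, which forces this kernel to be perfect. Thus the map has exactly the properties (a), (b) characterizing $X^+$, and the uniqueness of the plus construction gives the equivalence. Concretely, perfection follows since $H_1$ of the homotopy fiber vanishes. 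A secondary technical point is verifying the homotopy-commutativity hypothesis on $M$ and that the telescope maps agree with $\phi$. This is handled by the permutation-conjugation argument above, since conjugation induces a map freely homotopic to the identity on classifying spaces.
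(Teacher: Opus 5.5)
Your proposal takes a different, more hands-on route than the paper. The paper's proof is only a citation. For a symmetric monoidal groupoid with faithful translations and countably many objects, Weibel's Proposition~3 (equivalently Theorem IV.4.10 of the $K$-book) already gives that $[\operatorname{Aut}(\mathcal{S}),\operatorname{Aut}(\mathcal{S})]$ is perfect and that $\tau_{>0}K(\mathcal{S})\simeq B\operatorname{Aut}(\mathcal{S})^+$. All that remains is to note $\operatorname{Aut}(\operatorname{Free}^{\bullet}_{\otimes}(R))=\operatorname{GL}^{\bullet}_{\otimes}(R)$. You instead reprove that theorem in this case from the group-completion theorem. You use a telescope over a cofinal chain ($p^j$, $j!$, or products of the first $j$ elements of $S$), which are the same chains the paper uses later in Theorem~\ref{thm::p_perfection} and the subsequent lemmas. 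This gives an explicit map $\operatorname{BGL}^{\bullet}_{\otimes}(R)\to\tau_{>0}K(\mathcal{C})$ and shows where each hypothesis enters, at the cost of length.

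Step (3) needs care in two places.

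\emph{The main line is circular.} It ends with ``a homology isomorphism between simple spaces'', but simplicity of $\operatorname{BGL}^{\bullet}_{\otimes}(R)^+$ is part of what the lemma asserts, so you cannot use it here. Perfectness of $[G,G]$ can be proved directly. By Whitehead's lemma, $[A,B]\oplus I$ and $I\oplus[A,B]$ lie in $E_{2n}(R)$, so $I_k\otimes[A,B]\in E_{kn}(R)$ for $k\ge 2$, and $E_m(R)$ is perfect for $m\ge 3$. But perfectness only gives a map $\operatorname{BGL}^{\bullet}_{\otimes}(R)^+\to\tau_{>0}K(\mathcal{C})$ that is an isomorphism on $H_*$ and $\pi_1$. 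Without knowing the source is simple, or that the map is acyclic, this is not yet an equivalence.

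\emph{The fallback rests on a misattributed input.} Your fallback is the right way to close the argument, but acyclicity is not part of the McDuff--Segal theorem. Their result concerns integral (constant-coefficient) homology, and an integral homology isomorphism need not have a homotopy fibre with vanishing $H_1$. So ``perfection follows since $H_1$ of the homotopy fibre vanishes'' presupposes what is to be shown. You need one of the following:
\begin{itemize}
\item the local-coefficient strengthening of the group-completion theorem for homotopy-commutative monoids (Randal-Williams, 2013), which shows $M_\infty\to\Omega_0BM$ is acyclic;
\item the Weibel result the paper cites, whose conclusion is exactly that this map is a plus construction.
\end{itemize}
With that reference in place, the rest of your fallback is correct. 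The $H_1$-isomorphism onto an abelian $\pi_1$ identifies the kernel as $[G,G]$. Acyclicity makes that kernel perfect. Quillen's characterization of acyclic maps then identifies the map with the plus construction.
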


Lemma~\ref{lem::plus_construction_des} follows from the general theory of $\operatorname{Aut}(\mathcal{S})$, which we briefly explain before the proof.

\begin{definition}
Let $(M, \cdot)$ be a commutative monoid. The translation category $\operatorname{tr}(M)$ of $M$ has objects as $M$ and morphisms $x \xrightarrow{\cdot z} y$ for any elements $x, y, z$ such that $y = x \cdot z$.
\end{definition}

\begin{definition}
Suppose $\mathcal{S}$ is a symmetric monoidal category such that $\operatorname{Aut}_{\mathcal{S}}(-): \operatorname{tr}(\operatorname{obj}(\mathcal{S})) \to \operatorname{Grp}$ is a well-defined functor from the translation category of $\operatorname{obj}(\mathcal{S})$ to groups. Here, the functor $\operatorname{Aut}_{\mathcal{S}}(-)$ sends $x \xrightarrow{\cdot z} y$ to tensoring with the identity map $\operatorname{id}_{z}$. The \textit{automorphism group} of $\mathcal{S}$ is
\[\operatorname{Aut}(\mathcal{S}) \coloneqq \operatorname{colim}_{x \in \operatorname{tr}(\operatorname{obj}(\mathcal{S}))} \operatorname{Aut}_{\mathcal{S}}(x).\]
\end{definition}

\begin{example}\label{exp::plus-construction}
The examples of interests to us are
\[\operatorname{GL}_{\otimes}(R) = \operatorname{Aut}(\operatorname{Free}_{\otimes}(R)), \operatorname{GL}^{p}_{\otimes}(R) = \operatorname{Aut}(\operatorname{Free}^p_{\otimes}(R)), \text{ and } \operatorname{GL}^{S}_{\otimes}(R) = \operatorname{Aut}(\operatorname{Free}^{S}_{\otimes}(R)).\]
Note also that $\operatorname{GL}(R) = \operatorname{Aut}(\operatorname{Free}_{\oplus}(R))$.  In fact, $\operatorname{GL}_{\otimes}(R)$ was used in \cite{c80f3555-900f-35b5-ab05-6d397e6a44e6}.
\end{example}
 
\begin{proof}[Proof of Lemma~\ref{lem::plus_construction_des}]
For a symmetric monoidal category $\mathcal{S}$ whose (i) morphisms are isomorphisms and (ii) translations are faithful, and (iii) has countably many objects, Proposition 3 of \cite{c80f3555-900f-35b5-ab05-6d397e6a44e6} or Theorem IV.4.10 of \cite{weibel2013k} implies that $[\operatorname{Aut}(\mathcal{S}), \operatorname{Aut}(\mathcal{S})]$ is the maximal perfect normal subgroup of $\operatorname{Aut}(\mathcal{S})$ and $\tau_{>0} K(\mathcal{S}) \simeq B\operatorname{Aut}(S)^+$. The Lemma now follows directly from Example~\ref{exp::plus-construction}.
\end{proof}

\subsection{Homological Stability} For our purposes, we also discuss a homological stability condition on commutative rings $R$ that will be relevant to our argument.
\begin{definition}\label{def::homological_stable}
    We say $R$ is \textit{homologically stable} if for each fixed $j$, the map $i_*: H_j(\operatorname{BGL}_k(R); \Zbb) \to H_j(\operatorname{BGL}_{k+1}(R); \Zbb)$ is an isomorphism for sufficiently high $k$.
\end{definition}
Examples of such $R$ include any Noetherian ring of finite Krull dimension, or more generally any ring with finite stable range in the sense of EI.1.5 in \cite{weibel2013k} (see also Corollary IV.1.14.1 of \cite{weibel2013k}, and Theorem 4.11 of \cite{Kallen1980}). For non-Noetherian examples, the reader is encouraged to look at Corollaries 5.14, 5.17, 5.21 and 5.22 of \cite{Mikkola2010}, which have homological stability because they
have finite stable rank. Definition~\ref{def::homological_stable} is different from May's definition of \textit{convergent} in Definition VII.5.1 of \cite{May1977EInfinityRingSpaces}, but this will be enough for our purposes.
\begin{remark}
    By \cite[Theorem 5]{Vasershtein1971}, the ring $R=C([0,1]^{\mathbb{N}})$ of continuous real-valued functions on $[0,1]^{\mathbb{N}}$ has infinite stable rank. Hence, for every $m \in \mathbb{N}$, there exists a unimodular $(m+1)$-tuple over $R$ that's not reducible. Such tuples represent non-trivial elements in the kernel of the stabilization map $H_1 (\mathrm{GL}_m(R);\Z)\to H_1 (\mathrm{GL}_{m+1}(R);\Z)$ for infinitely many $m$. Therefore, $R$ does not satisfy homological stability.
\end{remark}

\vspace{-5pt}
\section{The $p$-perfection Case}\label{sec::p_perfection}

The purpose of this section is to prove Theorem~\ref{thm::k_theory_p_perfection}. We will see how the strategies we used for this will carry over to Theorem~\ref{thm::k_theory_rationalization}.

\begin{lemma}\label{lem::p_diagram_commute}
Consider the diagram of groups displayed in Figure~\ref{eq::p_diagram}. This is (freely) homotopy commutative after applying the classifying space functor $B$.
\end{lemma}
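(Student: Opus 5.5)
We have to guess Figure~\ref{eq::p_diagram}. Presumably it is the ladder with top row $\operatorname{GL}_{1}(R) \to \operatorname{GL}_p(R) \to \operatorname{GL}_{p^2}(R) \to \cdots$, whose maps are the block-diagonal inclusions $A \mapsto \operatorname{diag}(A, I)$ (stabilization), and bottom row the same groups with the Kronecker maps $\phi_{p^k, p^{k+1}}: A \mapsto I_p \otimes A$. In its squares, one composite is stabilization followed by $I_p\otimes -$, and the other goes through the ``$\cdot p$'' map, i.e. $A \mapsto \operatorname{diag}(A,\dots,A)$ placed in the top corner and padded by identity. We will prove homotopy commutativity of each square of this shape. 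The key principle is standard: for group homomorphisms $f, g: G \to H$, the maps $Bf, Bg: BG \to BH$ are freely homotopic if and only if $f$ and $g$ are conjugate, i.e. $g = c_h \circ f$ for some $h \in H$. So the whole proof reduces to exhibiting explicit conjugating matrices.

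The first step is to write both composites in each square as explicit matrix formulas. Typically one composite is
\[A \mapsto I_p \otimes \operatorname{diag}(A, I_m) = \operatorname{diag}(A, I_m, A, I_m, \dots, A, I_m)\]
and the other is $A \mapsto \operatorname{diag}(A, A, \dots, A, I_{pm})$.

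The second step is to observe that these two block-diagonal matrices differ by a permutation of the standard basis vectors: the one that moves the $p$ copies of the $A$-block to the front, preserving order within blocks. Let $P$ be the corresponding permutation matrix, which lies in $\operatorname{GL}_{N}(R)$ for the relevant $N$. Then $P\,\operatorname{diag}(A, I_m, \dots, A, I_m)\,P^{-1} = \operatorname{diag}(A,\dots,A,I_{pm})$ for all $A$ simultaneously, since $P$ does not depend on $A$. Hence the two homomorphisms are conjugate by the fixed element $P$. Conjugation by $P$ induces a self-map of $B\operatorname{GL}_N(R)$ that is freely homotopic to the identity: the homotopy is given by the path from the base point along the loop $P$, or on the bar construction by the standard natural transformation. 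Consequently the square commutes up to free homotopy after applying $B$.

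If the figure also contains diagonal or comparison maps, for example $\operatorname{GL}_{p^k}(R) \to \operatorname{GL}(R)$ versus Kronecker embeddings, the same permutation trick applies: any two homomorphisms that differ only by a reordering of identical diagonal blocks are conjugate by a permutation matrix.

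The main obstacle is bookkeeping, namely identifying the correct composites and the correct dimensions so that both sides land in the same $\operatorname{GL}_N(R)$. A further point is that this gives free, not based, homotopy, which is exactly what the lemma claims. This is sufficient for the later homology and plus-construction arguments, because conjugation acts trivially on group homology, and free homotopy classes into simple spaces such as $\operatorname{BGL}(R)^+$ agree with based ones.
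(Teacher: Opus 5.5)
Your proposal is correct and matches the paper's proof. Both arguments use the fact that conjugate homomorphisms induce freely homotopic maps on classifying spaces, and both exhibit a permutation matrix, independent of $A$, conjugating $\operatorname{diag}(A,\dots,A,I_p)$ to $I_p\otimes\operatorname{diag}(A,1)$. (The actual figure is not a two-row ladder: it is a grid whose columns are the stabilization sequences $\operatorname{GL}_1(R)\to\operatorname{GL}_2(R)\to\cdots$, joined by diagonal maps $\phi_{n,np}$. The composites you compute are nonetheless exactly the ones appearing in its squares.)
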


\begin{figure}
    \centering
\begin{adjustbox}{width=0.475\textwidth}
\begin{tikzcd}
	1 & p & {p^2} & {p^k} \\
	{\operatorname{GL}_1(R)} & {\operatorname{GL}_1(R)} & {\operatorname{GL}_1(R)} & {...} \\
	{\operatorname{GL}_2(R)} & \vdots & \vdots & \ddots \\
	{\operatorname{GL}_3(R)} & {\operatorname{GL}_p(R)} & {\operatorname{GL}_p(R)} & \ddots \\
	\vdots & \vdots & \vdots & \ddots \\
	& {\operatorname{GL}_{2p}(R)} & {\operatorname{GL}_{p^2}(R)} & \ddots \\
	& \vdots & \ddots & \ddots
	\arrow["i"', from=2-1, to=3-1]
	\arrow["{\phi_{1,1p}}", from=2-1, to=4-2]
	\arrow["i", from=2-2, to=3-2]
	\arrow["{\phi_{1,1p}}", from=2-2, to=4-3]
	\arrow["i", from=2-3, to=3-3]
	\arrow[from=2-3, to=4-4]
	\arrow["i"', from=3-1, to=4-1]
	\arrow["{\phi_{2,2p}}", from=3-1, to=6-2]
	\arrow["i", from=3-2, to=4-2]
	\arrow[from=3-2, to=5-3]
	\arrow["i", from=3-3, to=4-3]
	\arrow[from=3-3, to=5-4]
	\arrow["i"', from=4-1, to=5-1]
	\arrow["{\phi_{3,3p}}"', from=4-1, to=7-2]
	\arrow["i", from=4-2, to=5-2]
	\arrow["{\phi_{p,p^2}}", from=4-2, to=6-3]
	\arrow["i", from=4-3, to=5-3]
	\arrow[from=4-3, to=6-4]
	\arrow["i", from=5-2, to=6-2]
	\arrow["i", from=5-3, to=6-3]
	\arrow["i", from=6-2, to=7-2]
	\arrow["{\phi_{2p,2p^2}}", from=6-2, to=7-3]
	\arrow["i", from=6-3, to=7-3]
	\arrow[from=6-3, to=7-4]
\end{tikzcd}
\end{adjustbox}
\caption{A diagram of groups corresponding to $p$-perfection. After applying homology, this is the same as $H_*(\operatorname{hocolim} \operatorname{BGL}(R)^+ \xrightarrow{\cdot p} \operatorname{BGL}(R)^+ \xrightarrow{\cdot p} ...; \Zbb)$}
\label{eq::p_diagram}
\end{figure}

\begin{remark}
    The diagram of groups in Figure~\ref{eq::p_diagram} does not commute.
\end{remark}

\begin{proof}
It is a general fact that two group homomorphisms $f, g: G \to H$ induce the same (free) homotopy class of maps $BG \to BH$ if and only if $f$ and $g$ differ by a conjugation in $H$. Although the diagram above does not commute, it does commute up to conjugation of elements in the codomain. For example, to check that the diagram here commutes up to conjugation:
\[\begin{tikzcd}
	{\operatorname{GL}_{n}(R)} & {\operatorname{GL}_{np}(R)} \\
	{\operatorname{GL}_{n+1}(R)} & {\operatorname{GL}_{(n+1)p}(R)}
	\arrow["{\phi_{1,p}}", from=1-1, to=1-2]
	\arrow["i", from=1-1, to=2-1]
	\arrow["i", from=1-2, to=2-2]
	\arrow["{\phi_{2,2p}}", from=2-1, to=2-2]
\end{tikzcd},\]
we see that given $A \in \operatorname{GL}_n(R)$, its image in this diagram is respectively

\[\begin{adjustbox}{width=0.5\textwidth}
$A_1 = \begin{pmatrix}
    A & & & \\
    & \ddots & & \\
    & & A & \\
    & & & & I_p
\end{pmatrix} \text{ and } A_2 = \begin{pmatrix}
    \begin{pmatrix}
        A & \\
        & 1
    \end{pmatrix} & &  \\
    & \ddots &  \\
    & & \begin{pmatrix}
        A & \\
        & 1
    \end{pmatrix}
\end{pmatrix}$
\end{adjustbox}.\]
We see that the $A_1 = A_2$ differ by some permutation of rows and columns, which can be described as the conjugation of some permutation matrix $P$ by linear algebra. Furthermore, this $P$ is independent of the element $A \in \operatorname{GL}_{n}(R)$. This shows that the diagram commutes up to conjugation. The other squares can be verified similarly.
\end{proof}

In particular, homotopy commutative diagrams of spaces give commutative diagrams after taking homology. This allows us to consider the colimit of  Figure~\ref{eq::p_diagram} after taking homology, which will be important in the following lemma.

\begin{lemma}\label{lem::prime_homology_iso}
For any commutative ring $R$, there is an explicit isomorphism $$H_*(\operatorname{BGL}_{\otimes}^p(R); \Zbb) \cong H_*(\operatorname{hocolim}(\operatorname{BGL}(R)^+ \xrightarrow{\cdot p} \operatorname{BGL}(R)^+ \xrightarrow{\cdot p} ...); \Zbb). $$
\end{lemma}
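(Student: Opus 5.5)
The plan is to compute both sides as colimits of the same doubly indexed system of homology groups, namely the diagram of Figure~\ref{eq::p_diagram} after applying $H_*(B(-);\Zbb)$. By Lemma~\ref{lem::p_diagram_commute}, this is a genuinely commutative diagram of abelian groups indexed by $(k,n)$. The vertical maps are induced by $i$, and the diagonal maps $H_*(\operatorname{BGL}_n(R)) \to H_*(\operatorname{BGL}_{np}(R))$ are induced by $\phi_{n,np}$. The key observation is that homology commutes with filtered homotopy colimits and that colimits of a commutative bi-indexed diagram can be computed in either order; one uses a cofinal "staircase" subdiagram to pass between the two orders.

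For the left-hand side, I would take a cofinal path through the diagram that alternates one $\phi$-arrow with vertical arrows. Concretely, I would use the objects $\operatorname{GL}_{p^k}(R)$ in column $k$ at row $p^k$, linked by $\phi_{p^k,p^{k+1}}$, which lands in column $k+1$, row $p^{k+1}$. This path is exactly the directed system defining $\operatorname{GL}^p_\otimes(R)$. Since $B$ and $H_*$ commute with filtered colimits of groups, the colimit along this path is $H_*(\operatorname{BGL}^p_\otimes(R))$. I then need to check that the path is cofinal in the diagram, ordered by the arrows: any object $(k,n)$ maps to some $(k', p^{k'})$ with $k' \ge k$ and $p^{k'} \ge n p^{k'-k}$. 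This fails as stated because row indices grow by $p$ under $\phi$, so I would have to be careful. I would instead use the fact that $(k,n)$ maps to $(k, m)$ for all $m \ge n$ vertically, together with compatibility modulo conjugation. Hence the colimit of the whole diagram equals the colimit along diagonals.

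For the right-hand side, I would first take the colimit down each column. This gives $H_*(\operatorname{BGL}(R)) \cong H_*(\operatorname{BGL}(R)^+)$, using property (a) of the plus construction. The induced map between consecutive columns is then the colimit of the maps $\phi_{n,np}$. I must identify this map with $(\cdot p)_*$ on $H_*(\operatorname{BGL}(R)^+)$. The $H$-space product on $\operatorname{BGL}(R)^+$ is induced by block sum $\oplus$, and $\cdot p$ is $\Delta$ followed by $\mu_p$. On $\operatorname{BGL}_n$, this is $A \mapsto A\oplus\cdots\oplus A = I_p\otimes A$, which is precisely $\phi_{n,np}$; this uses the standard fact that the block-sum maps $\operatorname{BGL}_n\times\operatorname{BGL}_m\to\operatorname{BGL}_{n+m}$ model $\mu$ after plus construction, up to conjugation. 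Then, since homology commutes with sequential hocolims, the colimit of the columns is the homology of $\operatorname{hocolim}(\operatorname{BGL}(R)^+\xrightarrow{\cdot p}\cdots)$.

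The main obstacle is the interchange of the two colimits. The diagram only commutes after homology, so I cannot form a group-level colimit of the full diagram. I would handle this by working purely with the commutative diagram of abelian groups $H_j$, indexed by the poset of pairs $(k,n)$ with arrows generated by vertical and diagonal moves. Its colimit is computed both row-first and column-first by the Fubini property of colimits. It equals the colimit along the cofinal chain $(k,p^k)$, because every $(k,n)$ maps into $(k', p^{k'})$ for large $k'$: first go diagonally to $(k', np^{k'-k})$, then note that $np^{k'-k}\le p^{k'}$ once $p^k \ge n$, which one arranges by moving vertically first. The remaining point is to verify that the comparison maps are compatible with the identification of $\mu$ with block sum, which again holds up to conjugation and so is invisible on homology.
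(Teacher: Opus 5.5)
\textbf{There is a genuine gap: the chain $(k,p^k)$ is not cofinal.} Your treatment of the right-hand side agrees with the paper. You take column colimits, use that the plus construction is a homology isomorphism, and model $\cdot p$ by $A\mapsto I_p\otimes A$ up to conjugation. The left-hand side fails. Every path out of $(k,n)$ reaches column $k'$ at a row at least $np^{k'-k}$, because vertical arrows only increase the row and diagonal arrows multiply it by $p$. So $(k,n)$ maps into the chain if and only if $n\le p^k$, and this condition does not depend on $k'$. Moving vertically increases $n$ with $k$ fixed, so it makes things worse rather than arranging $p^k\ge n$. For example, $\operatorname{GL}_2(R)$ in column $0$ never maps into your chain. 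Compatibility up to conjugation only makes the diagram commute on homology; it does not supply the missing arrows. Your ``row-first'' Fubini computation has the same defect. The diagonal chain through $\operatorname{GL}_2(R)$ is $\operatorname{GL}_2(R)\to\operatorname{GL}_{2p}(R)\to\cdots$, whose colimit group is not $\operatorname{GL}^p_\otimes(R)$.

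\textbf{What is actually needed, and how the paper supplies it.} The cofinal piece is the union of all shifted diagonals $\Sigma_j$, where $\Sigma_j$ has $\operatorname{GL}_{p^{j+\ell}}(R)$ in column $\ell$. Each $\Sigma_j$ has colimit $H_*(\operatorname{BGL}^p_\otimes(R);\Zbb)$. Hence the colimit of the whole diagram is $\operatorname{colim}_j H_*(\operatorname{BGL}^p_\otimes(R);\Zbb)$. The transition maps $\Sigma_j\to\Sigma_{j+1}$ are induced by the stabilizations $\operatorname{GL}_{p^{j+\ell}}(R)\to\operatorname{GL}_{p^{j+\ell+1}}(R)$, not by $\phi$. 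Showing these transitions are isomorphisms is the real content of the lemma, and nothing in your proposal does it. The paper first assumes $R$ is homologically stable: in each degree the stabilizations are eventually isomorphisms, so each transition is an isomorphism. It then handles arbitrary $R$ by writing $R$ as a filtered colimit of finitely generated $\Zbb$-subalgebras, which are Noetherian of finite Krull dimension and hence homologically stable. Both sides, and the natural map induced by including $\Sigma_0$, commute with filtered colimits of rings. Without this stability input or a substitute, your argument produces a comparison map but not an isomorphism.
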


\begin{proof}
We first assume $R$ is homologically stable (Definition~\ref{def::homological_stable}). We write $L(p)$ to denote the diagram in Figure~\ref{eq::p_diagram} and $I$ to be the indexing diagram. Observe that
\[H_*(\operatorname{hocolim}(\operatorname{BGL}(R)^+ \xrightarrow{\cdot p} \operatorname{BGL}(R)^+ \xrightarrow{\cdot p} ...); \Zbb) \cong \operatorname{colim}_{I} H_*(L(p); \Zbb).\]
This is because homology commutes with filtered colimits, plus-constructions are homology isomorphisms, and $\cdot p$ on $\operatorname{BGL}(R)^+$ is induced by sending a matrix $A$ to $\bigoplus_{i=1}^p A$, which is exactly the block diagonal matrix of $p$-copies of $A$. Thus, the effect of multiplication by $p$ on the colimit diagram of $H_*(\operatorname{GL}(R))$ is portrayed exactly by applying $H_*(-)$ to Figure~\ref{eq::p_diagram}.\\

Now observe that the colimit of applying $H_*$ to the following diagram gives $H_*(\operatorname{BGL}_{\otimes}^p(R); \Zbb)$: 
\begin{equation}\label{eq::sub_diagram_p}%
\begin{tikzcd}[ampersand replacement=\&]
	{\operatorname{GL}_{p^k}(R)} \&\& {\operatorname{GL}_{p^{k+1}}(R)} \&\& {\operatorname{GL}_{p^{k+2}}(R)} \& {...}
	\arrow["{\phi_{p^k, p^{k+1}}}", from=1-1, to=1-3]
	\arrow["{\phi_{p^{k+1}, p^{k+2}}}", from=1-3, to=1-5]
	\arrow[from=1-5, to=1-6]
\end{tikzcd}\end{equation}
Furthermore, (\ref{eq::sub_diagram_p}) can be identified as the full sub-diagram $\Sigma_{k}$ of Figure~\ref{eq::p_diagram} consisting of $\operatorname{GL}_{p^k}(R)$ on column labeled $1$, $\operatorname{GL}_{p^{k+1}}(R)$ on column labeled $p$, $\operatorname{GL}_{p^{k+2}}(R)$ on column labeled $p^2$, and more generally $\operatorname{GL}_{p^{k+\ell}(R)}$ on column labeled $p^{\ell}$ for $\ell \geq 0$.\\

This shows in particular by cofinality that we can write
\[\operatorname{colim}_I H_*(L(p); \Zbb) \cong \operatorname{colim}_{k} H_*(\operatorname{colim} \Sigma_k; \Zbb).\]
Visually, we are turning Figure~\ref{eq::p_diagram} into looking at vertical maps between sub-diagrams given by $\Sigma_k$'s as $k$ increases. Now we know each $H_*(\operatorname{colim} \Sigma_k; \Zbb) \cong H_*(\operatorname{BGL}^p_{\otimes}(R); \Zbb)$, so we have that
\[\operatorname{colim}_I H_*(L(p); \Zbb) \cong \operatorname{colim}_{k} H_*(\operatorname{BGL}^p_{\otimes}(R); \Zbb).\]
Since $R$ is homologically stable, each transition map in the colimit on the right is an isomorphism as it is cofinally an isomorphism. It follows that
\[\operatorname{colim}_I H_*(L(p); \Zbb) \cong  H_*(\operatorname{BGL}^p_{\otimes}(R); \Zbb).\]

For a general commutative ring $R$, we note that $R$ is a filtered direct limit of its subrings $R_i$ that are finitely generated $\Zbb$-algebras and hence Noetherian of finite Krull dimension. Since Noetherian rings of finite Krull dimension are homologically stable, the isomorphism holds on the $R_i$'s. To show the isomorphism on $R$ holds, it suffices to check that the left side $\operatorname{colim}_{k} H_*(\operatorname{BGL}_{p^k}(-); \Zbb)$ and right side $\operatorname{colim} H_*(\operatorname{BGL}(-)^+; \Zbb) \cong \operatorname{colim} H_*(\operatorname{BGL}(-); \Zbb)$ commute with direct limit of rings. This is true for the left side as $\operatorname{GL}_n(-)$ commutes with direct limits, the classifying space functor $B$ commutes with filtered colimits, homology commutes with filtered colimits, and colimits commute with one another. This is also true for the right side as $\operatorname{GL}(-)$ is a filtered colimit of $\operatorname{GL}_n(-)$'s and colimits commute.
\end{proof}

We now prove Theorem~\ref{thm::k_theory_p_perfection}, which will follow directly from the following.
\begin{theorem}\label{thm::p_perfection}
There is an equivalence
\[(\operatorname{BGL}_{\otimes}^p(R))^+ = (\operatorname{hocolim}_{k} \operatorname{BGL}_{p^k}(R))^+ \simeq \operatorname{hocolim}(\operatorname{BGL}(R)^+ \xrightarrow{\cdot p} \operatorname{BGL}(R)^+ \xrightarrow{\cdot p} \operatorname{BGL}(R)^+ \xrightarrow{\cdot p} ...).\]
Using Lemma~\ref{lem::plus_construction_des} and the plus-construction description for $K(R)$, it follows that
\[\tau_{>0} K_{\otimes}^p(R) \simeq (\tau_{>0} K(R))\left[\frac{1}{p}\right].\]
\end{theorem}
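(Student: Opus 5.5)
The plan is to produce an actual map of spaces from $\operatorname{BGL}^p_{\otimes}(R)$ to $Y \coloneqq \operatorname{hocolim}(\operatorname{BGL}(R)^+ \xrightarrow{\cdot p} \operatorname{BGL}(R)^+ \xrightarrow{\cdot p} \cdots)$ that realizes the homology isomorphism of Lemma~\ref{lem::prime_homology_iso}. We then factor it through the plus construction and conclude with a Whitehead-type argument for simple spaces.

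\textbf{Step 1: construct the map.} On the $k$-th stage, take the composite
\[ \operatorname{BGL}_{p^k}(R) \to \operatorname{BGL}(R) \to \operatorname{BGL}(R)^+, \]
and send it into the $k$-th copy of $\operatorname{BGL}(R)^+$ in $Y$. Compatibility with the transition maps $B\phi_{p^k,p^{k+1}}$ is exactly the statement that the diagonal maps $\phi$ in Figure~\ref{eq::p_diagram} agree, after stabilization, with $\cdot p$ on $\operatorname{BGL}(R)^+$. This holds up to homotopy by Lemma~\ref{lem::p_diagram_commute}, since conjugation by permutation matrices induces maps freely homotopic to the identity.

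To obtain a genuine map out of the mapping telescope $\operatorname{hocolim}_k \operatorname{BGL}_{p^k}(R)$, homotopy commutativity of each square suffices. We choose the homotopies one stage at a time and glue them along the telescope cylinders. This produces $F\colon \operatorname{BGL}^p_{\otimes}(R) \to Y$.

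\textbf{Step 2: identify $F$ on homology.} By construction, $F_*$ is the map from $\operatorname{colim}_k H_*(\operatorname{BGL}_{p^k}(R))$, the sub-diagram $\Sigma_0$, into $\operatorname{colim}_I H_*(L(p))$. The proof of Lemma~\ref{lem::prime_homology_iso} shows this map is an isomorphism: cofinality with the $\Sigma_k$ in the homologically stable case, then a filtered colimit of subrings in general. We must check that the explicit isomorphism there is induced by inclusion of $\Sigma_0$, and this appears to be how it was built.

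\textbf{Step 3: pass to the plus construction.} The space $Y$ is a filtered hocolim of infinite loop spaces along infinite loop maps, so it is an infinite loop space. In particular $\pi_1(Y)$ is abelian, so $F$ kills the perfect group $[\operatorname{GL}^p_{\otimes}(R),\operatorname{GL}^p_{\otimes}(R)]$ (Lemma~\ref{lem::plus_construction_des}). Hence $F$ factors through $F^+\colon \operatorname{BGL}^p_{\otimes}(R)^+ \to Y$. Because $\operatorname{BGL}^p_{\otimes}(R) \to \operatorname{BGL}^p_{\otimes}(R)^+$ is a homology isomorphism, $F^+$ is an integral homology isomorphism.

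Both source and target are simple: the source by Lemma~\ref{lem::plus_construction_des}, and $Y$ as a connected $H$-space. The Whitehead theorem for simple spaces therefore makes $F^+$ a weak equivalence, and both spaces have the homotopy type of CW complexes. The second claim then follows by substituting $\tau_{>0}K(\operatorname{Free}^p_{\otimes}(R)) \simeq \operatorname{BGL}^p_{\otimes}(R)^+$, $\operatorname{BGL}(R)^+ \simeq \tau_{>0}K(R)$, and Definition~\ref{def::localization}.

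\textbf{Main obstacle.} The delicate point is Step 1 together with the first half of Step 2. Lemma~\ref{lem::p_diagram_commute} gives homotopy commutativity only square by square, and we need it to assemble into a map of telescopes. We also need the resulting $F_*$ to coincide with the abstract isomorphism of Lemma~\ref{lem::prime_homology_iso}, rather than merely proving that the two sides are abstractly isomorphic. The approach is to work with mapping telescopes, where only squarewise homotopies are needed and no higher coherence arises. We would also check directly that $\cdot p$ on $\operatorname{BGL}(R)^+$ restricts on $\operatorname{BGL}_n(R)$ to $B$ of the block sum $A\mapsto \bigoplus^p A$, up to homotopy. That identification uses the standard fact that the $H$-space structure on $\operatorname{BGL}(R)^+$ is induced by block sum.
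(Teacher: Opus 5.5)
Your proposal is correct and is essentially the paper's proof. The paper also builds a map of mapping telescopes from squarewise homotopies (citing Lemma A.10 of Galatius--Randal-Williams for this), identifies its effect on homology with the inclusion of the diagonal subdiagram into $\operatorname{colim}_I H_*(L(p);\Zbb)$ so that Lemma~\ref{lem::prime_homology_iso} applies, factors through the plus construction using that the target has abelian $\pi_1$, and concludes with a Whitehead argument for simple spaces. The only differences are cosmetic: you apply the homology Whitehead theorem for simple spaces directly rather than separately deducing the $\pi_1$-isomorphism from the $H_1$-isomorphism, and your indexing of the diagonal as $\Sigma_0$ is the consistent one where the paper writes $\Sigma_1$.
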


\begin{proof}
Consider the homotopy commutative ladder diagram
\[
\begin{tikzcd}
\mathrm{BGL}_{p^0}(R) \ar[r,"I_p \otimes -"] \ar[d,"s_0"] &
\mathrm{BGL}_{p^1}(R) \ar[r,"I_p \otimes -"] \ar[d,"s_1"] &
\mathrm{BGL}_{p^2}(R) \ar[r,"I_p \otimes -"] \ar[d,"s_2"] & \cdots \\
\mathrm{BGL}(R)^+ \ar[r,"\cdot p"] &
\mathrm{BGL}(R)^+ \ar[r,"\cdot p"] &
\mathrm{BGL}(R)^+ \ar[r,"\cdot p"] & \cdots
\end{tikzcd}
\]
where $s_i: \operatorname{BGL}_{p^k}(R) \to \operatorname{BGL}(R) \to \operatorname{BGL}(R)^+$ is the stabilization map composed before the plus construction map. In general, for a map of homotopy colimit diagrams that is only homotopy commutative, one needs the information of higher homotopy coherence to induce a map between their homotopy colimits. Fortunately, this is not required for sequential diagrams. By Lemma A.10 and its preceding discussions in \cite{GalatiusRandalWilliams2017}, an independent choice of homotopy $H_i$ for each $s_{i+1} \circ I_p \otimes - \simeq \cdot p \circ s_{i}$ is enough to give a map $\phi: \operatorname{hoclim}_k \operatorname{BGL}_{p^k}(R) \to \operatorname{hocolim}(\operatorname{BGL}(R)^+ \xrightarrow{\cdot p} \operatorname{BGL}(R)^+ \xrightarrow{\cdot p} ...) = (\operatorname{BGL}(R)^+)[\frac{1}{p}]$. Furthermore, the property on whether or not this map is an isomorphism on integral homology only depends on the underlying homotopy commutative diagram.\\

Under the identifications and notations in Lemma~\ref{lem::prime_homology_iso}, the map $\phi$ induces in homology:
\[\operatorname{colim}_k H_*(\operatorname{BGL}_{p^k}(R)) \cong H_*(\operatorname{hocolim}_k \operatorname{BGL}_{p^k}(R))\xrightarrow{\phi_*} \operatorname{hocolim}(\operatorname{BGL}(R)^+ \xrightarrow{\cdot p} \operatorname{BGL}(R)^+ \xrightarrow{\cdot p} ...).\]
This can be identified as sending $\operatorname{colim}_k H_*(\operatorname{BGL}_{p^k}(R))$ directly to $H_*(\operatorname{colim} \Sigma_1; \Zbb)$. Visually, this means the colimit fits in as the diagonal of Figure~\ref{eq::p_diagram}. Thus, by Lemma~\ref{lem::prime_homology_iso}, the map realizes the isomorphism
\[\operatorname{colim}_k H_*(\operatorname{BGL}_{p^k}(R)) \cong H_*(\operatorname{colim} \Sigma_1; \Zbb) \cong \operatorname{colim}_{k} H_*(\operatorname{colim} \Sigma_k; \Zbb) \cong \operatorname{colim}_I H_*(L(p); \Zbb) \]
\[\cong \operatorname{hocolim}(\operatorname{BGL}(R)^+ \xrightarrow{\cdot p} \operatorname{BGL}(R)^+ \xrightarrow{\cdot p} ...).\]
Now we have a map
\[\operatorname{hocolim}_k \operatorname{BGL}_{p^k}(R) \to \operatorname{hocolim}(\operatorname{BGL}(R)^+ \xrightarrow{\cdot p} \operatorname{BGL}(R)^+ \xrightarrow{\cdot p} \operatorname{BGL}(R)^+ \xrightarrow{\cdot p} ...) \]
that is an integral homology isomorphism and clearly sends to the commutator subgroup of $\pi_1(\operatorname{hocolim}_k \operatorname{BGL}_{p^k}(R))$ to  $0$, as the right side has abelian fundamental group. The universal property of the plus construction gives a map
\[(\operatorname{hocolim}_{k} \operatorname{BGL}_{p^k}(R))^+ \to \operatorname{hocolim}(\operatorname{BGL}(R)^+ \xrightarrow{\cdot p} \operatorname{BGL}(R)^+ \xrightarrow{\cdot p} \operatorname{BGL}(R)^+ \xrightarrow{\cdot p} ...).\]
It is a general fact that a map $f: X \to Y$ of simple spaces that is both an isomorphism on integral homology and $\pi_1$ is a homotopy equivalence (e.g., Lemma 1.1 of \cite{WAGONER1972349}). Since both sides are infinite loop spaces (the left side follows from Lemma~\ref{lem::plus_construction_des}, the right side is because $\operatorname{BGL}(R)^+$ is an infinite loop space) and hence simple, it suffices to check that this is also a $\pi_1$-isomorphism to show this is an equivalence. Clearly, the right side has an abelian fundamental group as $\pi_1$ commutes with filtered colimits, and the left side has an abelian fundamental group from Lemma~\ref{lem::plus_construction_des}. Since the homology isomorphism is, in particular, an $H_1$-isomorphism, it follows that they are equivalent.
\end{proof}

\section{The Rationalization Case}\label{sec::rationalization}

In this section, we will prove Theorem~\ref{thm::k_theory_rationalization} using strategies similar to the $p$-perfection case.

\begin{definition}\label{def::colimit_notation}
   Let $L$ denote the colimit diagram for $\operatorname{GL}(R)$ and $n \in \Zbb_{>0}$. The diagram $L \xrightarrow{\cdot n} L$ is obtained by adding the sequence $\phi_{k, nk}: \operatorname{GL}_{k}(R) \to \operatorname{GL}_{nk}(R)$ for all $k$ to the diagram $L \sqcup L$.
\end{definition}

The following Figure~\ref{fig:L} shows an example of $L \xrightarrow{\cdot 2} L$ for when $n = 2$.
\begin{figure}[!htb]
    \centering

 \begin{adjustbox}{width=0.25\textwidth}
\begin{tikzcd}
	{\operatorname{GL}_1(R)} & {\operatorname{GL}_1(R)} \\
	{\operatorname{GL}_2(R)} & {\operatorname{GL}_2(R)} \\
	{\operatorname{GL}_3(R)} & {\operatorname{GL}_3(R)} \\
	{\operatorname{GL}_4(R)} & {\operatorname{GL}_4(R)} \\
	\vdots & \vdots
	\arrow["i"', from=1-1, to=2-1]
	\arrow["{\phi_{1,2}}"{description}, from=1-1, to=2-2]
	\arrow["i", from=1-2, to=2-2]
	\arrow["i"', from=2-1, to=3-1]
	\arrow["{\phi_{2,4}}"{description}, from=2-1, to=4-2]
	\arrow["i", from=2-2, to=3-2]
	\arrow["i"', from=3-1, to=4-1]
	\arrow["{\phi_{3,6}}"{description}, from=3-1, to=5-2]
	\arrow["i", from=3-2, to=4-2]
	\arrow["i"', from=4-1, to=5-1]
	\arrow["{\phi_{4,8}}"{description}, from=4-1, to=5-2]
	\arrow["i", from=4-2, to=5-2]
\end{tikzcd}
\end{adjustbox}

\caption{Example illustrating $L \xrightarrow{\cdot 2} L$ in Definition~\ref{def::colimit_notation}.}
\label{fig:L}
\end{figure}

\begin{lemma}\label{lem::rational_commute}
Consider the diagram in Figure~\ref{eq::rationalization} using the notations in Definition~\ref{def::colimit_notation}. To elaborate, on the $n$-th column from $0, 1, 2, ...$, we put a copy of $L$ for each positive number with $n$ prime factors including multiplicity. For a copy of $L$ on row $n+1$ and a copy of $L$ on row $n$, we draw a map with $\cdot p$ between them if and only if their respective numbers differ by that prime $p$.\\

Figure~\ref{eq::rationalization} is (freely) homotopy commutative after applying the classifying space functor $B$.
\end{lemma}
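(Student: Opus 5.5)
The plan is to reduce freely homotopy commutativity to commutativity up to conjugation, exactly as in the proof of Lemma~\ref{lem::p_diagram_commute}. Recall the general fact: two homomorphisms $f,g\colon G\to H$ induce freely homotopic maps $BG\to BH$ if and only if $g = c_P\circ f$ for some $P\in H$, where $c_P$ denotes conjugation by $P$. Every arrow in Figure~\ref{eq::rationalization} is one of two kinds: a stabilization $i\colon \operatorname{GL}_k(R)\to\operatorname{GL}_{k+1}(R)$ inside a copy of $L$, or a Kronecker map $\phi_{k,pk}\colon A\mapsto I_p\otimes A$ from a copy of $L$ in one column to a copy in the next. So it suffices to show that every elementary cell of the diagram commutes up to conjugation by a matrix that does not depend on $A$.

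There are two types of cells, and I will handle them in this order.

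\textbf{Type (a): mixed squares.} These involve one $\cdot p$ map and the stabilizations $i$, as in Lemma~\ref{lem::p_diagram_commute}. Going one way around the square, $A\in\operatorname{GL}_k(R)$ is sent to
\[\operatorname{diag}(I_p\otimes A, I_{m})\]
inside $\operatorname{GL}_{p(k+1)}(R)$ or a larger matrix group. Going the other way, it is sent to
\[I_p\otimes \operatorname{diag}(A,1), \text{ stabilized further.}\]
These two matrices differ by a permutation of the standard basis vectors. The permutation depends only on $p$, $k$ and the target size, not on $A$, so the square commutes up to conjugation by a fixed permutation matrix. The composite of several $i$'s is again a block inclusion, so longer paths reduce to this case by induction.

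\textbf{Type (b): diamonds.} These arise from two primes $p\neq q$ along the paths $m\to pm\to pqm$ and $m\to qm\to pqm$. Going one way, $A\in\operatorname{GL}_k(R)$ is sent to $I_q\otimes(I_p\otimes A)$; going the other way, to $I_p\otimes(I_q\otimes A)$. Both land in $\operatorname{GL}_{pqk}(R)$. Using associativity of the Kronecker product, these are $(I_q\otimes I_p)\otimes A$ and $(I_p\otimes I_q)\otimes A$. Each equals $I_{pq}\otimes A$, so the diamond commutes on the nose. If the indices are instead arranged so that the intermediate groups differ by stabilizations, the cell decomposes into a diamond together with type (a) squares, and the conjugating matrices compose.

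The main obstacle is the bookkeeping of the prime-labelled graph. Every arrow between two copies of $L$ is some $\phi_{k,pk}$ with a varying target row index, and I have to check that every cell closes up at the correct rows with the same target group. If some path lands in a different row, I will postcompose with stabilizations to reach a common row and use type (a). I also have to make sure the required statement is only cellwise: the lemma claims homotopy commutativity, not coherence, so I will not try to make the chosen permutations compatible with each other. That compatibility is only needed later, after applying homology, where it is automatic.
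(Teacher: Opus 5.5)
Your proposal is correct and follows essentially the same route as the paper: you reduce free homotopy commutativity to commutativity up to conjugation by an $A$-independent permutation matrix, handle the squares inside each $L \xrightarrow{\cdot p} L$ as in Lemma~\ref{lem::p_diagram_commute}, and then treat the squares of copies of $L$. The only difference is in that last step: you note that the pure diamonds commute on the nose by associativity of the Kronecker product and absorb interleaved stabilizations by composing cellwise conjugations (push all $i$'s past the $\phi$'s, then reorder primes), whereas the paper directly observes that both composites are block diagonal with $ab=cd$ copies of $A$ and the same number of $1$'s in $A$-independent positions.
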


\begin{figure}
    \centering
\begin{adjustbox}{width=0.6\textwidth}
\begin{tikzcd}
	{\textcolor{red}{\text{0 Prime Factor}}} & {\textcolor{red}{\text{1 Prime Factor}}} & {\textcolor{red}{\text{2 Prime Factors}}} & {\textcolor{red}{\text{...}}} \\
	&& {\underbrace{L}_{\textcolor{red}{(4)}}} & \dots \\
	& {\underbrace{L}_{\textcolor{red}{(2)}}} \\
	{\underbrace{L}_{\textcolor{red}{(1)}}} && {\underbrace{L}_{\textcolor{red}{(6)}}} & \dots \\
	& {\underbrace{L}_{\textcolor{red}{(3)}}} \\
	& {\underbrace{L}_{\textcolor{red}{(5)}}} & {\underbrace{L}_{\textcolor{red}{(9)}}} & \dots \\
	& \vdots & {\underbrace{L}_{\textcolor{red}{(10)}}} & \ddots
	\arrow["{\cdot 2}", from=3-2, to=2-3]
	\arrow["{\cdot 3}", from=3-2, to=4-3]
	\arrow["{\cdot 5}"{description}, dashed, from=3-2, to=7-3]
	\arrow["{\cdot 2}", from=4-1, to=3-2]
	\arrow["{\cdot 3}", from=4-1, to=5-2]
	\arrow["{\cdot 5}", from=4-1, to=6-2]
	\arrow[shift right=5, from=4-1, to=7-2]
	\arrow["{\cdot 2}", from=5-2, to=4-3]
	\arrow["{\cdot 3}", from=5-2, to=6-3]
	\arrow["\cdot2", from=6-2, to=7-3]
\end{tikzcd}.
\end{adjustbox}
\caption{A diagram of groups corresponding to the rationalization case. Details are explained in Lemma~\ref{lem::rational_commute}.}
\label{eq::rationalization}
\end{figure}

\begin{proof}
    We first note that applying $B$ to $L \xrightarrow{\cdot p} L$ gives a freely homotopy commutative diagram (i.e., each square in the diagram commutes up to a free homotopy), as it recurs back to the case of Lemma~\ref{lem::p_diagram_commute}.
    
    It then suffices for us to show that for primes $a, b, c, d$ such that $ab = cd$, applying $B$ to the diagram
\begin{equation}\label{eq::commutative_square}\begin{tikzcd}[ampersand replacement=\&]
	{\underbrace{L}_{(X)}} \& {\underbrace{L}_{(Y)}} \\
	{\underbrace{L}_{(Z)}} \& {\underbrace{L}_{(W)}}
	\arrow["{\cdot a}", from=1-1, to=1-2]
	\arrow["{\cdot c}"', from=1-1, to=2-1]
	\arrow["{\cdot b}", from=1-2, to=2-2]
	\arrow["{\cdot d}"', from=2-1, to=2-2]
\end{tikzcd}\end{equation}
gives a freely homotopy commutative diagram. Note this includes the case when $a = c, b = d$. Indeed, suppose $A \in \operatorname{GL}_{n}(R)$ is at the copy of $L$ with $(X)$ underneath. Evaluating $A$ at a square in Diagram (\ref{eq::commutative_square}) starting at $\operatorname{GL}_n(R)$ in $\underbrace{L}_{(X)}$ must give two $N \times N$ matrices: $\psi_{a, b}(A)$, along $L \xrightarrow{\cdot a} L \xrightarrow{\cdot b} L$, and $\psi_{c, d}(A)$, along $L \xrightarrow{\cdot c} L \xrightarrow{\cdot d} L$. In terms of block diagonal matrices, $\psi_{a,b}(A)$ has $ab$ copies of $A$ and $N - ab n$ remaining copies of $1$. The matrix $\psi_{c, d}(A)$ has $cd$ copies of $A$ and $N - cd n$ remaining copies of $1$. Note that here we may have extra copies of $1$ because the diagram $L \xrightarrow{\cdot n} L$ has vertical arrows going down (see Figure~\ref{fig:L}) that could be part of the composition when checking the commutativity.

Since $ab = cd$ and the positions of $A$ in the block diagonal matrices are fixed no matter what $A \in \operatorname{GL}_n(R)$ is, we see that $\psi_{a, b}(A)$ and $\psi_{c, d}(A)$ differ by the conjugation of the same permutation matrix. Thus, the same reasoning as in Lemma~\ref{lem::p_diagram_commute} shows that applying $B$ to (\ref{eq::commutative_square}) gives a freely homotopy commutative diagram.
\end{proof}

\vspace{-2pt}
We now obtain an analog of Lemma~\ref{lem::prime_homology_iso} for the rationalization case.
\begin{lemma}\label{lem::rational_homology_iso}
  There is an explicit isomorphism $H_*(\operatorname{BGL}_{\otimes}(R); \Zbb) \cong H_*(\operatorname{hocolim}_{(\Zbb_{>0}, |)} \operatorname{BGL}(R)^+)$. Here, the transition maps in the colimit are given by $\cdot k$ for an arrow $d \xrightarrow{\cdot k} n$ in the colimit diagram.  
\end{lemma}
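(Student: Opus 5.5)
We follow the proof of Lemma~\ref{lem::prime_homology_iso}, with the sequential diagram $L(p)$ replaced by the diagram of Figure~\ref{eq::rationalization}; call this diagram $\mathcal{L}$, with indexing category $J$. We first treat homologically stable $R$ and then pass to general $R$ by writing it as a filtered colimit of finitely generated subrings.

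\textbf{Step 1: identify the right-hand side.} By Lemma~\ref{lem::rational_commute}, applying $H_*(B(-);\Zbb)$ to $\mathcal{L}$ gives an honestly commutative diagram of abelian groups. Several facts combine here:
\begin{itemize}
\item homology commutes with filtered homotopy colimits;
\item $\operatorname{BGL}(R)\to\operatorname{BGL}(R)^+$ is a homology isomorphism;
\item the map $\cdot k$ on $\operatorname{BGL}(R)^+$ is induced levelwise by $\phi_{n,kn}$, i.e.\ $A\mapsto I_k\otimes A$.
\end{itemize}
Together these give
\[
H_*\Bigl(\operatorname{hocolim}_{(\Zbb_{>0},|)}\operatorname{BGL}(R)^+\Bigr)\cong \operatorname{colim}_J H_*(\mathcal{L};\Zbb).
\]
The copies of $L$ in $\mathcal{L}$ are labelled by positive integers, and the arrows are labelled by primes. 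The divisibility poset is generated by prime steps, and Lemma~\ref{lem::rational_commute} makes the squares with $ab=cd$ commute after taking homology. Hence the colimit over $J$ agrees with the colimit over $(\Zbb_{>0},|)$ of the groups $\operatorname{colim}_n H_*(\operatorname{BGL}_n(R))$.

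\textbf{Step 2: cofinal "diagonal" subdiagrams.} For each $m\in\Zbb_{>0}$, let $\Sigma_m$ be the subdiagram of $\mathcal{L}$ that places $\operatorname{GL}_{md}(R)$ in the copy of $L$ labelled $d$, for every $d$. Its edges are $\phi_{md,mdq}$ along each prime arrow $\cdot q$. Then $\Sigma_m$ is a copy of the directed system $(\operatorname{GL}_{d}(R),\phi)$ restricted to the multiples of $m$. That set is cofinal in $(\Zbb_{>0},|)$, so $\operatorname{colim} H_*(\Sigma_m)\cong H_*(\operatorname{BGL}_\otimes(R))$.

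The subdiagrams are linked by the stabilizations $i$ from $\Sigma_m$ to $\Sigma_{m'}$ for $m\mid m'$, and the union of the $\Sigma_m$ is cofinal in $J$. This gives
\[
\operatorname{colim}_J H_*(\mathcal{L})\cong \operatorname{colim}_{m}H_*(\operatorname{colim}\Sigma_m)\cong \operatorname{colim}_m H_*(\operatorname{BGL}_\otimes(R)).
\]
The transition maps in the last colimit are induced by stabilization. Under homological stability each such map is an isomorphism in every fixed degree, because it is cofinally a stable-range isomorphism. The colimit therefore collapses to $H_*(\operatorname{BGL}_\otimes(R);\Zbb)$.

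\textbf{Main obstacle.} The hard step is making the cofinality bookkeeping honest. There are two sub-issues.
\begin{itemize}
\item The stabilizations between the $\Sigma_m$ commute with the $\phi$'s only up to conjugation by permutation matrices. So the argument must be carried out entirely after applying $H_*$, where Lemma~\ref{lem::p_diagram_commute} and Lemma~\ref{lem::rational_commute} give strict commutativity.
\item A plain "jump from $\operatorname{GL}_n$ to $\operatorname{GL}_{m}$" is only a composite of the $i$'s. I would check that every object of $\mathcal{L}$, namely $\operatorname{GL}_n(R)$ in copy $d$, maps to some $\Sigma_m$: take $m$ divisible by $d$ with $m/d\ge n$, so that $\operatorname{GL}_n\subset\operatorname{GL}_{m}$ in copy $d$. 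I would also check that any two such maps agree in the colimit.
\end{itemize}

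\textbf{General $R$.} Write $R=\operatorname{colim}R_i$ over its finitely generated subrings. These are Noetherian of finite Krull dimension, hence homologically stable. Both sides commute with filtered colimits of rings, because $\operatorname{GL}_n$, $B$, homology and colimits all do. The isomorphism is natural in ring maps, since all the maps involved are given by block-matrix formulas, so it passes to the colimit, exactly as at the end of Lemma~\ref{lem::prime_homology_iso}.
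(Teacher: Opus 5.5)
Your proposal is correct and is essentially the paper's proof. Both identify the right-hand side with the colimit of $H_*$ over the diagram of Figure~\ref{eq::rationalization}, exhaust it by the diagonal subdiagrams $\Sigma_m$, collapse the outer colimit by homological stability, and then pass to arbitrary $R$ through its finitely generated subrings; the only difference is that the paper uses just the chain $m = n!$, i.e.\ $\operatorname{GL}_{k\cdot n!}(R)$ in the copy labelled $(k)$, where you allow all $m$. You also spell out two points the paper leaves implicit: the comparison map is the canonical one induced by $\Sigma_1 \subset J$, hence natural in $R$, which is what the filtered-colimit step needs; and all identifications must be made after applying $H_*$.
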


\begin{proof}
First, we assume that $R$ is homologically stable. We write $F$ to denote Figure~\ref{eq::rationalization}. Observe that $\operatorname{colim} H_*(F)$ is the same as $H_*(\operatorname{hocolim}_{(\Zbb_{>0}, |)} \operatorname{BGL}(R)^+)$ after commuting the filtered colimit with $H_*$, getting rid of the plus construction, and expanding the colimit diagram.

Now observe that the colimit of applying $H_*$ to the following diagram gives $H_*(\operatorname{BGL}_{\otimes}(R); \Zbb)$ by cofinality.
\begin{equation}\label{eq:n_factorial}
\begin{adjustbox}{width=0.55\textwidth}
\begin{tikzcd}
	& {\text{\textcolor{red}{1 prime factor}}} & {\text{\textcolor{red}{2 prime factors}}} & {\textcolor{red}{...}} \\
	& {\operatorname{GL}_{2n!}(R)} & {\operatorname{GL}_{4n!}(R)} & {...} \\
	{\operatorname{GL}_{n!}(R)} & {\operatorname{GL}_{3n!}(R)} & {\operatorname{GL}_{6n!}(R)} & {...} \\
	& {\operatorname{GL}_{5n!}(R)} & {\operatorname{GL}_{9n!}(R)} & {...} \\
	& \vdots & \vdots & \ddots
	\arrow["{\cdot 2}"{description}, from=2-2, to=2-3]
	\arrow["{\cdot 3}"{description}, from=2-2, to=3-3]
	\arrow["{\cdot 2}", from=3-1, to=2-2]
	\arrow["{\cdot 3}", from=3-1, to=3-2]
	\arrow["{\cdot 5}"', from=3-1, to=4-2]
	\arrow["{\cdot 2}"{description}, from=3-2, to=3-3]
	\arrow["{\cdot 3}"{description}, from=3-2, to=4-3]
\end{tikzcd}
\end{adjustbox}
\end{equation}
Furthermore, (\ref{eq:n_factorial}) can be identified as the full subdiagram $\Sigma_n$ in Figure~\ref{eq::rationalization} over the terms $\operatorname{GL}_{kn!}(R)$ in the copy of $L$ labeled with $(k)$ underneath, for all $k \geq 1$. This shows in particular that $\operatorname{colim} H_*(F; \Zbb) \cong \operatorname{colim}_n \operatorname{colim} H_*(\Sigma_n; \Zbb)$. Furthermore, each transition map in the outer colimit is an isomorphism as $R$ is homologically stable, thus we have that $$\operatorname{colim}_n \operatorname{colim} H_*(\Sigma_n; \Zbb) \cong H_*(\operatorname{BGL}_{\otimes}(R); \Zbb).$$
The general case now follows similarly as in Lemma~\ref{lem::prime_homology_iso} as both sides commute with filtered colimits, and every ring $R$ is a direct limit of homologically stable rings.
\end{proof}

We now use Lemma~\ref{lem::rational_homology_iso} to show an equivalence of plus construction spaces.
\begin{lemma}\label{lem::rational_tensor}
There is an equivalence $\operatorname{BGL}_{\otimes}(R)^+ \simeq \operatorname{hocolim}_{(\Zbb, |)} \operatorname{BGL}(R)^+ = \operatorname{BGL}(R)^+ \otimes \Qbb$.
\end{lemma}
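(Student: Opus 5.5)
The plan is to mirror the proof of Theorem~\ref{thm::p_perfection}, replacing the sequential diagram over $\{1,p,p^2,\dots\}$ by the filtered poset $(\Zbb_{>0},|)$. Since homotopy commutative diagrams over a non-linear poset need higher coherences to have a well-defined homotopy colimit map, I will first reduce to a sequential diagram using a cofinal chain. The chain $1 \mid 2! \mid 3! \mid 4! \mid \cdots$ is cofinal in $(\Zbb_{>0},|)$. So $\operatorname{BGL}_{\otimes}(R) \simeq \operatorname{hocolim}_n \operatorname{BGL}_{n!}(R)$, with transition maps $\phi_{n!,(n+1)!} = I_{n+1}\otimes -$. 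Likewise $\operatorname{BGL}(R)^+\otimes\Qbb \simeq \operatorname{hocolim}(\operatorname{BGL}(R)^+ \xrightarrow{\cdot 2} \operatorname{BGL}(R)^+ \xrightarrow{\cdot 3} \cdots)$, with the $n$-th map being $\cdot (n+1)$, as in Definition~\ref{def::localization}.

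Next I build the homotopy commutative ladder with vertical maps $s_n\colon \operatorname{BGL}_{n!}(R)\to \operatorname{BGL}(R)\to\operatorname{BGL}(R)^+$. The square $s_{n+1}\circ(I_{n+1}\otimes -)\simeq (\cdot(n+1))\circ s_n$ holds because both composites are induced by homomorphisms $\operatorname{GL}_{n!}(R)\to\operatorname{GL}_N(R)$ placing $n+1$ diagonal copies of $A$ plus identity blocks. These differ by conjugation by a fixed permutation matrix, exactly as in Lemma~\ref{lem::p_diagram_commute} and Lemma~\ref{lem::rational_commute}. Here I use that $\cdot(n+1)$ on $\operatorname{BGL}(R)^+$ is induced on $\operatorname{BGL}(R)$ by block sum, as already used in Lemma~\ref{lem::prime_homology_iso}. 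Choosing homotopies independently, Lemma A.10 of \cite{GalatiusRandalWilliams2017} gives a map $\phi\colon \operatorname{BGL}_{\otimes}(R)\to \operatorname{BGL}(R)^+\otimes\Qbb$.

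The main step is showing that $\phi_*$ is an integral homology isomorphism. I will identify $\phi_*$ with the isomorphism of Lemma~\ref{lem::rational_homology_iso}. Restricted to the chain, $H_*$ of the target is the colimit of $H_*$ of the sub-diagram of Figure~\ref{eq::rationalization} lying over $1,2!,3!,\dots$. The map $\phi_*$ sends $\operatorname{colim}_n H_*(\operatorname{BGL}_{n!}(R))$ into this colimit along the subdiagram of entries $\operatorname{GL}_{n!}(R)$ in the copy of $L$ labeled $(n!)$, which is essentially the subdiagram $\Sigma_1$ in the notation of that proof. The proof of Lemma~\ref{lem::rational_homology_iso} shows that inclusion of $\Sigma_1$ induces the isomorphism, after first treating homologically stable $R$ and then passing to filtered colimits of finitely generated subrings. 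This bookkeeping, checking that $\phi_*$ really is the composite isomorphism $\operatorname{colim}H_*(\Sigma_1)\cong\operatorname{colim}_n\operatorname{colim}H_*(\Sigma_n)\cong \operatorname{colim}H_*(F)$ and that the chain restriction does not change the colimit, is where I expect the main obstacle.

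Finally, the target has abelian $\pi_1$, since it is a filtered colimit of infinite loop spaces. So $\phi$ kills the commutator subgroup, and the universal property of the plus construction gives $\phi^+\colon \operatorname{BGL}_{\otimes}(R)^+\to\operatorname{BGL}(R)^+\otimes\Qbb$, which is still an integral homology isomorphism. Both spaces are simple: the source by Lemma~\ref{lem::plus_construction_des}, and the target as a colimit of infinite loop spaces. Both also have abelian $\pi_1$, so the $H_1$-isomorphism is a $\pi_1$-isomorphism. Lemma 1.1 of \cite{WAGONER1972349} then shows $\phi^+$ is a homotopy equivalence.
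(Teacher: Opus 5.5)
Your proposal is correct and follows the paper's proof: the same ladder over the cofinal chain $1 \mid 2! \mid 3! \mid \cdots$ with vertical maps $s_n$, and the map $\phi$ from Lemma A.10 of Galatius--Randal-Williams. It then identifies $\phi_*$ with the composite $\operatorname{colim}_n H_*(\operatorname{BGL}_{n!}(R)) \cong \operatorname{colim} H_*(\Sigma_1;\Zbb) \cong \operatorname{colim}_n \operatorname{colim} H_*(\Sigma_n;\Zbb) \cong \operatorname{colim} H_*(F;\Zbb)$ from Lemma~\ref{lem::rational_homology_iso}, and finishes with the plus-construction and simple-space argument from the end of Theorem~\ref{thm::p_perfection}, with the bookkeeping you flag handled in the paper in exactly this way by cofinality identifications.
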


\begin{proof}
Consider the homotopy commutative ladder diagram
\[
\begin{tikzcd}
\mathrm{BGL}_{1}(R) \ar[r,"I_2 \otimes -"] \ar[d,"s_0"] &
\mathrm{BGL}_{2!}(R) \ar[r,"I_3 \otimes -"] \ar[d,"s_1"] &
\mathrm{BGL}_{3!}(R) \ar[r,"I_4 \otimes -"] \ar[d,"s_2"] & \cdots \\
\mathrm{BGL}(R)^+ \ar[r,"\cdot 2"] &
\mathrm{BGL}(R)^+ \ar[r,"\cdot 3"] &
\mathrm{BGL}(R)^+ \ar[r,"\cdot 4"] & \cdots
\end{tikzcd}
\]
Using Lemma A.10 of \cite{GalatiusRandalWilliams2017} again (as in Theorem~\ref{thm::p_perfection}), we have a map $\phi: \operatorname{hocolim}_n \operatorname{BGL}_{n!}(R) \to \operatorname{hocolim}(\operatorname{BGL}(R)^+ \xrightarrow{\cdot 2} \operatorname{BGL}(R)^+ \xrightarrow{\cdot 3} ...) \cong \operatorname{BGL}(R)^+ \otimes \Qbb$. The map in homology $\phi$ induces
\[\operatorname{colim}_{n} H_*(\operatorname{BGL}_{n!}(R)) \to H_*(\operatorname{hocolim}(\operatorname{BGL}(R)^+ \xrightarrow{\cdot 2} \operatorname{BGL}(R)^+ \xrightarrow{\cdot 3} ...); \Zbb)\]
is identified with the map
\[\operatorname{colim}_{n} H_*(\operatorname{BGL}_{n!}(R)) \xrightarrow{\cong}_f \operatorname{colim} H_*(\Sigma_1; \Zbb) \xrightarrow{\cong}_g \operatorname{colim}_n \operatorname{colim} H_*(\Sigma_n; \Zbb) \xrightarrow{\cong}_h \operatorname{colim} H_*(F; \Zbb) \cong \]
\[H_*(\operatorname{hocolim}(\operatorname{BGL}(R)^+ \xrightarrow{\cdot 2} \operatorname{BGL}(R)^+ \xrightarrow{\cdot 3} ...); \Zbb)\]
where the $f$ is given by identifying the domain as a cofinal subdiagram in the second, $g$ is the natural inclusion into the colimit and is an isomorphism by Lemma~\ref{lem::rational_homology_iso}, the map $h$ is an isomorphism in Lemma~\ref{lem::rational_homology_iso}, and the final map is an isomorphism by cofinal identifications (in the backward direction).

Thus, we now have a map
\[ \operatorname{hocolim}_n \operatorname{BGL}_{n!}(R) \to \operatorname{BGL}(R)^+ \otimes \Qbb\]
that is a homology isomorphism. Now, $\operatorname{BGL}_{\otimes}(R)^+$ is an infinite loop space by Lemma~\ref{lem::plus_construction_des} and hence simple, and $\operatorname{BGL}(R)^+ \otimes \Qbb$ is clearly also simple. Thus, the same lines of arguments as in the end of Theorem~\ref{thm::p_perfection} would conclude that $\operatorname{BGL}_{\otimes}(R)^+ \simeq \operatorname{hocolim}_{(\Zbb, |)} \operatorname{BGL}_n(R)$.
\end{proof}

Finally, we prove Theorem~\ref{thm::k_theory_rationalization} as follows.

\begin{proof}[Proof of Theorem~\ref{thm::k_theory_rationalization}]
Lemma~\ref{lem::rational_tensor} and Lemma~\ref{lem::plus_construction_des} show that $\tau_{> 0} K(\operatorname{Free}_{\otimes}(R))$ is equivalent to the rationalization of $\tau_{> 0} K(R)$. To conclude that they are equivalent as infinite loop spaces, we note that their corresponding connective spectra have the same homotopy groups and are all rational. It is a well-known fact that a rational spectrum $X$ is equivalent $\bigvee_{n} H\pi_n(X)$. Thus, their corresponding connective spectra are equivalent, and hence the underlying infinite loop space structures.
\end{proof}

\section{The General Case of Multiplicative Localization}\label{sec::k_theory_localization}

In this section, we will prove Theorem~\ref{thm::k_theory_localization} by generalizing the strategies we have used earlier. Fix $S \subseteq \Zbb_{> 0}$ to be a multiplicatively closed subset. We define a diagram $F_{S}$ as the sub-diagram of Figure~\ref{eq::rationalization} consisting only of $L$'s labeled with $s \in S$ and maps between them.

\begin{lemma}\label{lem::localization_iso}
   There is an explicit isomorphism $H_*(\operatorname{BGL}_{\otimes}^S(R); \Zbb) \cong H_*(\operatorname{hocolim}_{S} \operatorname{BGL}(R)^+; \Zbb)$.
\end{lemma}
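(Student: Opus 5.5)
The plan follows the proofs of Lemma~\ref{lem::prime_homology_iso} and Lemma~\ref{lem::rational_homology_iso} almost verbatim, with the diagram $F$ replaced by $F_S$. First I assume $R$ is homologically stable. By Lemma~\ref{lem::rational_commute}, every square of $F$ becomes freely homotopy commutative after applying $B$. Since $F_S$ is a full subdiagram of $F$ (its arrows are the composites $\cdot(t/s)$ for $s \mid t$ in $S$, each of which is a composite of prime steps in $F$), applying $B$ to $F_S$ also gives a freely homotopy commutative diagram. Hence $H_*(F_S;\Zbb)$ is an honest commutative diagram of abelian groups. Commuting homology with filtered colimits and using that the plus construction is a homology isomorphism, I get
\[
\operatorname{colim} H_*(F_S;\Zbb) \cong H_*(\operatorname{hocolim}_S \operatorname{BGL}(R)^+;\Zbb).
\]
The map $\cdot s$ on $\operatorname{BGL}(R)^+$ is induced by $A \mapsto \bigoplus_{i=1}^s A$, which is exactly what $L \xrightarrow{\cdot s} L$ records.

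Next I need a cofinal family of subdiagrams playing the role of the $\Sigma_k$. Enumerate $S = \{1 = a_1 < a_2 < \cdots\}$ and set $m_n = a_1 a_2 \cdots a_n \in S$. Every element of $S$ divides some $m_n$, which replaces $n!$ from the rational case. Let $\Sigma^S_n$ be the subdiagram of $F_S$ consisting of $\operatorname{GL}_{s m_n}(R)$ in the copy of $L$ labelled $s$, for all $s \in S$. Its maps are $\phi_{s m_n, t m_n}$ for $s \mid t$, so $\operatorname{colim} H_*(\Sigma^S_n;\Zbb) \cong \operatorname{colim}_{s \in S} H_*(\operatorname{BGL}_{s m_n}(R);\Zbb)$. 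The ranks $s m_n$ are cofinal in $(S,|)$, so this colimit is $H_*(\operatorname{BGL}^S_\otimes(R);\Zbb)$. The $\Sigma^S_n$ are linked by the vertical maps $i$ of each $L$, and their union is cofinal in $F_S$. This gives
\[
\operatorname{colim} H_*(F_S;\Zbb) \cong \operatorname{colim}_n \operatorname{colim} H_*(\Sigma^S_n;\Zbb).
\]
Homological stability makes each transition map eventually a levelwise isomorphism, hence an isomorphism on colimits, and this yields the claimed explicit isomorphism.

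For general $R$, I write $R$ as a filtered colimit of finitely generated $\Zbb$-subalgebras, which are Noetherian of finite Krull dimension and so homologically stable. Both sides commute with filtered colimits of rings, exactly as argued at the end of Lemma~\ref{lem::prime_homology_iso}. The isomorphism is natural in $R$ because it is built from cofinality identifications and the stabilization maps.

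The main obstacle is the cofinality step. I must check that the chosen elements $\operatorname{GL}_{s m_n}(R)$ really form a cofinal subdiagram of $F_S$, i.e.\ that every object $\operatorname{GL}_k(R)$ in the copy labelled $s$ maps into some $\Sigma^S_n$ compatibly, via vertical inclusions with $k \le s m_n$ and $n$ large. I also need the identifications to be compatible, using only homotopy commutativity (squares commuting after $H_*$), which Lemma~\ref{lem::rational_commute} provides.
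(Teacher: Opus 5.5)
Your proposal is correct and follows the paper's proof essentially step for step. It uses the same products $m_n = a_1\cdots a_n$ (the paper's $A_n$) and the same subdiagrams $\Sigma_n$ on $\operatorname{GL}_{s m_n}(R)$ in the copy of $L$ labelled $s$. It also uses homological stability in the same way to make the outer transition maps isomorphisms, and reduces to finitely generated subrings in the same way. Your explicit remarks that the arrows of $F_S$ are the composites $\cdot(t/s)$ and that the comparison map is natural in $R$ make precise points the paper leaves implicit. The cofinality check you flag is immediate: $\operatorname{GL}_k(R)$ in the copy labelled $t$ includes into $\operatorname{GL}_{t m_n}(R)$ in the same copy as soon as $m_n \geq k$.
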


\begin{proof}
    We first assume $R$ is homologically stable. Observe that $\operatorname{colim} H_*(F_S)$ is the same as $H_*(\operatorname{hocolim}_{S} \operatorname{BGL}(R)^+)$ after commuting the filtered colimit with $H_*$, getting rid of the plus construction, and expanding the colimit diagram. Now order the elements of $S$ in increasing order as $1 = a_1 < a_2 < a_3 < a_4 < ... < a_n < ...$. Write $A_n = \prod_{i = 1}^n a_i$. For each $n$, we consider the full subdiagram $\Sigma_n$ in $F_{S}$ on the terms $\operatorname{GL}_{a_i A_n}(R)$ in the copy of $L$ labeled with $(a_i)$ underneath, for $i \geq 1$. This diagram $\Sigma_n$ can be naturally identified as a cofinal diagram in $\Sigma_1$, so
    \[\operatorname{colim} H_*(\Sigma_n; \Zbb) \cong \operatorname{colim} H_*(\Sigma_1; \Zbb) \cong H_*(\operatorname{BGL}_{\otimes}^{S}; \Zbb).\]
    Furthermore, we can write 
    \[\operatorname{colim} H_*(F_S; \Zbb) \cong \operatorname{colim}_n \operatorname{colim} H_*(\Sigma_n; \Zbb) \cong \operatorname{colim}_n H_*(\operatorname{BGL}_{\otimes}^{S}; \Zbb). \]
    Since $R$ is homologically stable, the transition maps in the middle term above are all isomorphisms. Thus, we have that $\operatorname{colim} H_*(F_S; \Zbb) \cong H_*(\operatorname{BGL}_{\otimes}^S(R); \Zbb)$. The general case now follows similarly as in Lemma~\ref{lem::prime_homology_iso} as both sides commute with filtered colimits, and every ring $R$ is a direct limit of homologically stable rings.
\end{proof}

Finally, Theorem~\ref{thm::k_theory_localization} will follow from the following statement.
\begin{lemma}
There is an equivalence $\operatorname{BGL}_{\otimes}^S(R)^+ \simeq \operatorname{hocolim}_{S} \operatorname{BGL}(R)^+ = \operatorname{BGL}(R)^+[S^{-1}]$.
\end{lemma}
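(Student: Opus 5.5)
The plan is to follow the proofs of Theorem~\ref{thm::p_perfection} and Lemma~\ref{lem::rational_tensor} verbatim, with the cofinal chain $1 = A_1 \mid A_2 \mid A_3 \mid \cdots$ (where $A_n = \prod_{i=1}^n a_i$, as in Lemma~\ref{lem::localization_iso}) playing the role of $p^k$ or $n!$. Each $A_n$ lies in $S$, since $S$ is multiplicatively closed. Every $s = a_i \in S$ divides $A_n$ for $n \geq i$, so the chain $(A_n)$ is cofinal in $(S, |)$. Hence
\[\operatorname{BGL}_{\otimes}^S(R) \simeq \operatorname{hocolim}_n \operatorname{BGL}_{A_n}(R),\]
with transition maps $I_{a_{n+1}} \otimes -$. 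Likewise
\[\operatorname{hocolim}_S \operatorname{BGL}(R)^+ \simeq \operatorname{hocolim}\bigl(\operatorname{BGL}(R)^+ \xrightarrow{\cdot a_2} \operatorname{BGL}(R)^+ \xrightarrow{\cdot a_3} \cdots\bigr),\]
which agrees with Definition~\ref{def::localization}.

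First, form the ladder whose top row is $\operatorname{BGL}_{A_n}(R) \xrightarrow{I_{a_{n+1}} \otimes -} \operatorname{BGL}_{A_{n+1}}(R)$ and whose bottom row is $\operatorname{BGL}(R)^+ \xrightarrow{\cdot a_{n+1}} \operatorname{BGL}(R)^+$. The verticals $s_n$ are stabilization followed by the plus map. Each square commutes up to free homotopy: both composites send $A$ to a block matrix containing $a_{n+1}$ copies of $A$ padded with $1$'s, and the two matrices differ by conjugation by a permutation matrix independent of $A$, exactly as in Lemma~\ref{lem::p_diagram_commute} and Lemma~\ref{lem::rational_commute}. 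Because the diagram is sequential, Lemma A.10 of \cite{GalatiusRandalWilliams2017} lets independent choices of homotopies assemble into a map
\[\phi: \operatorname{hocolim}_n \operatorname{BGL}_{A_n}(R) \to \operatorname{BGL}(R)^+[S^{-1}].\]
Whether $\phi$ is a homology isomorphism depends only on the homotopy commutative ladder.

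Second, show that $\phi_*$ is an integral homology isomorphism. Using the notation of Lemma~\ref{lem::localization_iso}, $\phi_*$ factors as
\[\operatorname{colim}_n H_*(\operatorname{BGL}_{A_n}(R)) \cong \operatorname{colim} H_*(\Sigma_1) \to \operatorname{colim}_n \operatorname{colim} H_*(\Sigma_n) \cong \operatorname{colim} H_*(F_S) \cong H_*(\operatorname{hocolim}_S \operatorname{BGL}(R)^+).\]
The first isomorphism holds because the chain $\operatorname{GL}_{A_n}(R)$ is cofinal in $\Sigma_1$. The middle map is the inclusion of the first term of the colimit, which Lemma~\ref{lem::localization_iso} shows is an isomorphism for all $R$. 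This is where the homological stability reduction and the direct limit argument are used. The main obstacle is this step: checking that $\phi_*$ really is this composite, meaning the image of $\operatorname{GL}_{A_n}(R)$ under $s_n$ lands on the diagonal term of the copy of $L$ labeled $(A_n)$ in $F_S$. I would verify it entry by entry as in Theorem~\ref{thm::p_perfection}, using that $I_{a_{n+1}} \otimes -$ is literally the arrow $\cdot a_{n+1}$ of $F_S$ restricted to $\operatorname{GL}_{A_n}(R)$.

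Finally, the target has abelian $\pi_1$, so the commutator subgroup of $\operatorname{GL}^S_{\otimes}(R)$, which is its maximal perfect subgroup by Lemma~\ref{lem::plus_construction_des}, dies under $\phi$. The universal property of the plus construction then factors $\phi$ through a map $\operatorname{BGL}_{\otimes}^S(R)^+ \to \operatorname{BGL}(R)^+[S^{-1}]$, which is still a homology isomorphism. Both spaces are infinite loop spaces and hence simple: the source by Lemma~\ref{lem::plus_construction_des}, the target because it is a filtered homotopy colimit of infinite loop spaces. Both fundamental groups are abelian, so the $H_1$-isomorphism is a $\pi_1$-isomorphism. Lemma 1.1 of \cite{WAGONER1972349} then shows the map is a homotopy equivalence.
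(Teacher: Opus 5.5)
Your proposal is correct and follows essentially the same route as the paper. Both use the cofinal chain $A_n=\prod_{i\le n}a_i$ and the ladder map $\phi$ from Lemma A.10 of Galatius--Randal-Williams, identify $\phi_*$ with the isomorphism of Lemma~\ref{lem::localization_iso} by cofinality, and finish with the plus-construction universal property and Wagoner's criterion for simple spaces. The only difference is cosmetic: you route $\phi_*$ through $\operatorname{colim} H_*(\Sigma_1)\to\operatorname{colim}_n\operatorname{colim} H_*(\Sigma_n)\cong\operatorname{colim} H_*(F_S)$ as in Lemma~\ref{lem::rational_tensor}, while the paper writes the same identification as a commutative square whose vertical maps are cofinality isomorphisms.
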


\begin{proof}
Order the elements of $S$ in increasing order and write them as $1 = a_1 < a_2 < a_3 < a_4 < ... < a_n < ...$. Write $A_n = \prod_{i=1}^n a_i$. We can consider the homotopy commutative ladder diagram
\[
\begin{tikzcd}
\mathrm{BGL}_{A_1}(R) \ar[r,"I_{s_2} \otimes -"] \ar[d,"s_0"] &
\mathrm{BGL}_{A_2}(R) \ar[r,"I_{s_3} \otimes -"] \ar[d,"s_1"] &
\mathrm{BGL}_{A_3}(R) \ar[r,"I_{s_4} \otimes -"] \ar[d,"s_2"] & \cdots \\
\mathrm{BGL}(R)^+ \ar[r,"\cdot s_2"] &
\mathrm{BGL}(R)^+ \ar[r,"\cdot s_3"] &
\mathrm{BGL}(R)^+ \ar[r,"\cdot s_4"] & \cdots
\end{tikzcd}
.\]
Lemma A.10 of \cite{GalatiusRandalWilliams2017} again gives us a map $\phi: \operatorname{hocolim}_n \operatorname{BGL}_{A_n}(R) \to \operatorname{hocolim}(\operatorname{BGL}(R)^+ \xrightarrow{\cdot a_2} \operatorname{BGL}(R)^+ \xrightarrow{\cdot a_3} ...)$. After taking homology, we have a commutative diagram
\[\begin{tikzcd}
	{H_*(\operatorname{hocolim}_n \operatorname{BGL}_{A_n}(R); \Zbb)} & {H_*(\operatorname{hocolim}(\operatorname{BGL}(R)^+ \xrightarrow{\cdot a_2} \operatorname{BGL}(R)^+ \xrightarrow{\cdot a_3} ...); \Zbb)} \\
	{H_*(\operatorname{BGL}_{\otimes}^S(R); \Zbb)} & {H_*(\operatorname{hocolim}_{S} \operatorname{BGL}(R)^+; \Zbb)}
	\arrow["{\phi_*}", from=1-1, to=1-2]
	\arrow["{\text{cofinal}}"', from=1-1, to=2-1]
	\arrow["{\text{cofinal}}", from=1-2, to=2-2]
	\arrow["{\text{Map in Lemma~\ref{lem::localization_iso}}}"', from=2-1, to=2-2]
\end{tikzcd}\]
where the vertical arrows are isomorphisms, and the bottom arrow is an isomorphism by Lemma~\ref{lem::localization_iso}. This shows $\phi$ induces an integral homology isomorphism. Since $\operatorname{BGL}_{\otimes}^S(R)^+$ is simple by Lemma~\ref{lem::plus_construction_des} and $\operatorname{BGL}(R)^+[S^{-1}]$ is simple. The same lines of arguments as in the end of Theorem~\ref{thm::k_theory_p_perfection} would conclude that $\operatorname{BGL}_{\otimes}^S(R)^+ \simeq \operatorname{hocolim}_{S} \operatorname{BGL}(R)^+ = \operatorname{BGL}(R)^+[S^{-1}]$.
\end{proof}

\begin{remark}
    Our argument also adapts to other scenarios of interest. For example, if one restricts the definition of $\operatorname{Free}_{\otimes}(\mathbb{C})$ to only unitary matrices, similar statements would hold for the unitary analog of algebraic $K$-theory (i.e., the homotopy groups of $(BU^{\delta})^+$, where $U^{\delta}$ is the underlying discrete group of $U$). The homological stability condition would be satisfied due to \cite{Sah1986}.
\end{remark}

\bibliographystyle{alpha}
\bibliography{references.bib}

@article{c80f3555-900f-35b5-ab05-6d397e6a44e6,
 ISSN = {00029939, 10886826},
 URL = {http://www.jstor.org/stable/2043975},
 author = {C.A. Weibel},
 journal = {Proceedings of the American Mathematical Society},
 number = {1},
 pages = {1--7},
 publisher = {American Mathematical Society},
 title = {{$K$-Theory of Azumaya Algebras}},
 volume = {81},
 year = {1981}
}

@book{bass1968algebraic,
  title={Algebraic K-theory},
  author={Bass, H.},
  isbn={9780805306606},
  lccn={lc68024366},
  series={Hyman Bass},
  year={1968},
  publisher={W. A. Benjamin}
}

@book{May1977EInfinityRingSpaces,
  author    = {May, J.P.},
  title     = {{E$_\infty$ Ring Spaces and E$_\infty$ Ring Spectra}},
  series    = {Lecture Notes in Mathematics},
  volume    = {577},
  publisher = {Springer-Verlag},
  address   = {Berlin, Heidelberg},
  year      = {1977},
  isbn      = {978-3-540-08136-4},
  doi       = {10.1007/BFb0097608},
  url       = {https://link.springer.com/book/10.1007/BFb0097608},
  note      = {Lecture Notes in Mathematics, volume 577. With contributions by Frank Quinn, Nigel Ray, and Jørgen
Tornehave}
}

@article{ramzi2025pperfectiongroupcompletionmathbbeinftymonoids,
  author  = {Ramzi, Maxime and Yakerson, Maria},
  title   = {$p$-perfection and group completion of {$\mathbb{E}_\infty$}-monoids},
  journal = {Documenta Mathematica},
  year    = {2026},
  doi     = {10.4171/DM/1086},
  note    = {Published online first}
}

@book{weibel2013k,
  title={The $K$-book: An Introduction to Algebraic $K$-theory},
  author={Weibel, C.A.},
  isbn={9780821891322},
  lccn={2012039660},
  series={Graduate Studies in Mathematics},
  url={https://books.google.com/books?id=Ja8xAAAAQBAJ},
  year={2013},
  publisher={American Mathematical Society}
}

@article{Segal1974,
  author    = {G. Segal},
  title     = {Categories and cohomology theories},
  journal   = {Topology},
  volume    = {13},
  year      = {1974},
  pages     = {293--312},
  issn      = {0040-9383},
  doi       = {10.1016/0040-9383(74)90022-6},
  mrnumber  = {0353298},
}

@book{may_operads,
  author    = {May, J.P.},
  title     = {The Geometry of Iterated Loop Spaces},
  series    = {Lecture Notes in Mathematics},
  volume    = {271},
  publisher = {Springer-Verlag},
  address   = {Berlin, Heidelberg},
  year      = {1972},
  isbn      = {978-3-540-05904-2, 978-3-540-37603-3},
  doi       = {10.1007/BFb0067491}
}

@article{may_thomason_uniqueness,
title = {The uniqueness of infinite loop space machines},
journal = {Topology},
volume = {17},
number = {3},
pages = {205-224},
year = {1978},
issn = {0040-9383},
doi = {https://doi.org/10.1016/0040-9383(78)90026-5},
url = {https://www.sciencedirect.com/science/article/pii/0040938378900265},
author = {J.P. May and R. Thomason}
}

@article{may2009preciselyeinftyringspaces,
    author={J. P. May},
     title={{What precisely are $E_{\infty}$ ring spaces and $E_{\infty}$ ring spectra?}}, 
    journal = {Geometry \& Topology Monographs},
    year = {2009},
    volume={16},
    number={09}
}

@article{MAY19821,
title = {Multiplicative infinite loop space theory},
journal = {Journal of Pure and Applied Algebra},
volume = {26},
number = {1},
pages = {1-69},
year = {1982},
issn = {0022-4049},
doi = {https://doi.org/10.1016/0022-4049(82)90029-9},
url = {https://www.sciencedirect.com/science/article/pii/0022404982900299},
author = {J.P. May}
}

@article{Gepner_2015,
   title={Universality of multiplicative infinite loop space machines},
   volume={15},
   ISSN={1472-2747},
   url={http://dx.doi.org/10.2140/agt.2015.15.3107},
   DOI={10.2140/agt.2015.15.3107},
   number={6},
   journal={Algebraic \& Geometric Topology},
   publisher={Mathematical Sciences Publishers},
   author={Gepner, D. and Groth, M. and Nikolaus, T.},
   year={2015},
   month=Dec, pages={3107–3153} }

@misc{ji2026quantumcellularautomatagroup,
      title={{Quantum Cellular Automata: The Group, the Space, and the Spectrum}}, 
      author={M. Ji and B. Yang},
      year={2026},
      eprint={2602.16572},
      archivePrefix={arXiv},
      primaryClass={math.AT},
      url={https://arxiv.org/abs/2602.16572}, 
}

@InProceedings{quillen_2,
author="Grayson, D.",
editor="Stein, M.R.",
title="{Higher algebraic K-theory: II}",
booktitle="Algebraic K-Theory",
year="1976",
publisher="Springer Berlin Heidelberg",
address="Berlin, Heidelberg",
pages="217--240",
isbn="978-3-540-37964-5"
}

@article{farrelly2020review,
  title={A review of quantum cellular automata},
  author={Farrelly, T.},
  journal={Quantum},
  volume={4},
  pages={368},
  year={2020},
  publisher={Verein zur F{\"o}rderung des Open Access Publizierens in den Quantenwissenschaften}
}

@article{Yang_2026,
   title={{Categorifying Clifford QCA}},
   volume={407},
   ISSN={1432-0916},
   url={http://dx.doi.org/10.1007/s00220-026-05596-3},
   DOI={10.1007/s00220-026-05596-3},
   number={4},
   journal={Communications in Mathematical Physics},
   publisher={Springer Science and Business Media LLC},
   author={Yang, B.},
   year={2026},
   month=Mar }

@article{WAGONER1972349,
title = {Delooping classifying spaces in algebraic K-theory},
journal = {Topology},
volume = {11},
number = {4},
pages = {349-370},
year = {1972},
issn = {0040-9383},
doi = {https://doi.org/10.1016/0040-9383(72)90031-6},
url = {https://www.sciencedirect.com/science/article/pii/0040938372900316},
author = {J.B. Wagoner}
}

@article{951cf383-5a4d-3775-b8b2-0bafd2f162fe,
 ISSN = {00029947},
 URL = {http://www.jstor.org/stable/1998675},
 author = {C. A. Weibel},
 journal = {Transactions of the American Mathematical Society},
 number = {2},
 pages = {621--635},
 publisher = {American Mathematical Society},
 title = {{$KV$-Theory of Categories}},
 volume = {267},
 year = {1981}
}

@article{GalatiusRandalWilliams2017,
  author  = {Galatius, S. and Randal-Williams, O.},
  title   = {{Homological Stability for Moduli Spaces of High Dimensional Manifolds. II}},
  journal = {Annals of Mathematics},
  series   = {Second Series},
  volume   = {186},
  number   = {1},
  year     = {2017},
  pages    = {127--204},
  doi      = {10.4007/annals.2017.186.1.4}
}

@article{Mikkola2010,
author={Mikkola, K.
and Sasane, A.},
title={{Bass and Topological Stable Ranks of Complex and Real Algebras of Measures, Functions and Sequences}},
journal={Complex Analysis and Operator Theory},
year={2010},
month={May},
day={01},
volume={4},
number={2},
pages={401-448},
issn={1661-8262},
doi={10.1007/s11785-009-0009-1},
url={https://doi.org/10.1007/s11785-009-0009-1}
}

@article{Kallen1980,
author = {Kallen, W. van der},
journal = {Inventiones mathematicae},
pages = {269-295},
title = {Homology Stability for Linear Groups.},
url = {http://eudml.org/doc/142749},
volume = {60},
year = {1980},
}

@article{Vasershtein1971,
author={Vasershtein, L. N.},
title={Stable rank of rings and dimensionality of topological spaces},
journal={Functional Analysis and Its Applications},
year={1971},
month={Apr},
day={01},
volume={5},
number={2},
pages={102-110},
issn={1573-8485},
doi={10.1007/BF01076414},
url={https://doi.org/10.1007/BF01076414}
}

@misc{Hofstadter1976Gplot,
  author       = {Hofstadter, Douglas R.},
  title        = {{File:Gplot by Hofstadter.jpg}},
  year         = {1976},
  howpublished = {Wikimedia Commons},
  note         = {Early rendering of Hofstadter's butterfly, \url{{https://commons.wikimedia.org/wiki/File:Gplot_by_Hofstadter.jpg}}. Accessed June 2026}
}

@article{Sah1986,
author = {Sah, Chih-Han},
journal = {Commentarii mathematici Helvetici},
pages = {308-348},
title = {{Homology of classical Lie groups made discrete, I. Stability theorems and Schur mulipliers.}},
url = {http://eudml.org/doc/140055},
volume = {61},
year = {1986},
}

@Inbook{Spanier1966,
author="Spanier, E. H.",
title="Algebraic Topology",
year="1966",
publisher="Springer New York",
address="New York, NY",
isbn="978-1-4684-9322-1",
doi="10.1007/978-1-4684-9322-1"
}

@book{may2011more,
  title={More Concise Algebraic Topology: Localization, Completion, and Model Categories},
  author={May, J.P. and Ponto, K.},
  isbn={9780226511795},
  series={Chicago Lectures in Mathematics},
  url={https://books.google.com/books?id=QCuvnoqsMEgC},
  year={2011},
  publisher={University of Chicago Press}
}

\end{document}